\newtheorem{hypothesis}[theorem]{Hypothesis}
\begin{document}

\title*{Convergence of a particle method for gradient flows on the $L^p$-Wasserstein space}
% Use \titlerunning{Short Title} for an abbreviated version of
% your contribution title if the original one is too long
\author{Rong Lei\orcidID{0009-0006-5622-6051}}
% Use \authorrunning{Short Title} for an abbreviated version of
% your contribution title if the original one is too long
\institute{Rong Lei \at Tohoku University, 6-3, Aramaki Aza-Aoba, Aoba-ku, Sendai, Miyagi 980-8578, Japan, \email{lei.rong.e3@tohoku.ac.jp}
}
%
% Use the package "url.sty" to avoid
% problems with special characters
% used in your e-mail or web address
%
\maketitle

\abstract*{}

\abstract{We study the particle method to approximate the gradient flow on the $L^p$-Wasserstein space. This method relies on the discretization of the energy introduced by \cite{CPSW} via nonoverlapping balls centered at the particles and preserves the gradient flow structure at the particle level. We prove the convergence of the discrete gradient flow to the continuum gradient flow on the $L^p$-Wasserstein space over $\mathbb R$, specifically to the doubly nonlinear diffusion equation in one dimension. }

\section{Introduction}
In 1998, Jordan, Kinderlehrer and Otto \cite{JKO} found that entropy is the natural free energy functional in the study of Fokker-Planck equations in non-equilibrium statistical mechanics and proved the Fokker-Planck equation can be viewed as a steepest descent for the associated free energy with respect to the $L^2$-Wasserstein metric by using the minimizing movements method introduced by De Giorgi \cite{DeG}. In 2001, Otto \cite{Otto2001} established the infinite dimensional Riemannian geometric structure on the $L^2$-Wasserstein space and proved that the heat equation and the porous medium equation (or the fast diffusion equation) on Euclidean space are the gradient flows of the Boltzmann entropy and the R\'enyi entropy, respectively, on the $L^2$-Wasserstein space with the infinite dimensional Riemannian metric introduced in \cite{Otto2001}. More generally, the gradient flow of energy $E=\int_{\mathbb R^n}H(\rho(x))dx$ on the $L^2$-Wasserstein space has the form of continuity equation $\partial_t\rho+\nabla\cdot (\rho v) = 0$ with the vector field $v$ given by $v=\nabla\frac{\delta H}{\delta \rho}$, where ${\frac{\delta H}{\delta \rho}}=H^\prime(\rho)$ is the $L^2$-derivative of $H$. See e.g. \cite{AGS, V1, V2}. 

Using De Giorgi's minimizing movements method, a differentiable structure on the underlying space is not required to define the gradient flow, allowing it to be established on general metric spaces (see \cite{AGS}). Consequently, gradient flows can be defined on the $L^p$-Wasserstein space—the space of all probability measures with finite $p$-th moment, equipped with the $W_p$-distance. A notable example of the gradient flow on 
the $L^p$-Wasserstein space is the following Leibenson's equation (see \cite{Leib, IMJ, Vazquez})
\begin{equation}\label{Leibenson}
    \partial_t u=\Delta_q u^\gamma,\quad q>1, \gamma>0,
\end{equation}
which describe the filtration of turbulent compressible fluid through a porous medium, where $\frac{1}{p}+\frac{1}{q}=1$ and the parameter $q$ characterizes the turbulence of the flow while $\gamma-1$ is the index of polytropy of the fluid which determines the relation $PV^{\gamma-1} = const$ between volume $V$ and pressure $P$. Equation \eqref{Leibenson} is a so-called doubly nonlinear diffusion equation. Particularly, when $\gamma=1$, it becomes the $q$-heat equation
\begin{equation}\label{q-heat}
\partial_t  u=\Delta_q u, 
\end{equation}
where $\Delta_q u:=\nabla\cdot\left(|\nabla u|^{q-2}\nabla u\right)$. In general, the gradient flow of $\int_{\mathbb R^n}H(\rho(x))dx$ on $L^p$-Wasserstein space is formally given by 
\begin{equation}\label{GF0}
\begin{cases}
\partial_t\rho+\nabla\cdot\left(\rho|\nabla \varphi|^{q-2}\nabla \varphi\right)=0, \\
\nabla\varphi=-\nabla H^\prime(\rho).
\end{cases}
\end{equation} 
For the rigorous definition, see Section 2. In view of this, the energy corresponding to \eqref{Leibenson} is given by 
\begin{equation}\label{typical energy}
    E(u)=\frac{\gamma}{(\gamma+1-p)(\gamma+2-p)}\int_{\mathbb R^n} u(x)^{\gamma+2-p}dx.
\end{equation}

The discrete approximation to the doubly nonlinear degenerate parabolic equations such as \eqref{Leibenson} is intensively studied. For example, see \cite{EH, ABK} and the reference therein. See also \cite{DEGH} for the gradient discretization method based on some discrete spaces and mappings for nonlinear and nonlocal parabolic equations. In \cite{RMS}, Rossi-Mielke-Savar\'e studied the approximation scheme by time discretization to approach the abstract doubly nonlinear equations in reflexive Banach spaces. On the other hand, in the perspective of gradient flows, Serfaty \cite{Serfaty} proved the $\Gamma$-convergence of gradient flows on metric spaces. By using this result, Carrillo-Patacchini-Sternberg-Wolansky \cite{CPSW} proved the convergence of a particle method to approximate the solutions to the continuum gradient flow on the $L^2$-Wasserstein space, which restrict the continuum gradient flow to the discrete setting of atomic measures, while keeping the gradient flow structure at the discrete level via a suitable approximation of the energy on finite numbers of Dirac masses. The numerical study of this method is also discussed by Carrillo-Huang-Patacchini-Wolansky in \cite{CHPW}. The aim of this paper is to extend this particle method to the approximation of the gradient flows on the $L^p$-Wasserstein space. 

Denote $\Omega^d$ either the closure of a bounded connected domain of $\mathbb R^d$ or $\mathbb R^d$ itself (we simply write $\Omega$ when $d=1$). Let us consider the energy functional $E: \mathcal{P}_p(\Omega^{d})\to(-\infty,+\infty]$ is given by 
\begin{equation}\label{energy}
  E(\mu)=\left\{\begin{aligned}
   &\int_{\Omega^d} H(\rho(x))dx, &\text{ if }\mu=\rho dx\in \mathcal{P}_{p,ac}(\Omega^d),\\
   &+\infty, &\text{ otherwise },
  \end{aligned}
  \right.
\end{equation}
where $\mathcal{P}_{p,ac}(\Omega^d)$ denotes the space of all probability measures on $\Omega^d$ with density functions with respect to the Lebesgue measure $dx$ and with finite $p$-th moment where $p>1$. Moreover, we assume that $H$ satisfies the following hypothesis:
\begin{hypothesis}\label{H1}
Assume $H$ is a proper, convex, non-negative function in $C^\infty\left((0,\infty)\right)\cap C^0\left([0,\infty)\right)$ with superlinear growth at infinity and $H(0)=0$. Moreover, 
\begin{itemize}
  \item $H$ satisfies the doubling condition: there exists a constant $A>0$ such that 
  \begin{equation*}
    H(x+y)\leq A(1+H(x)+H(y)), \quad \forall x,y\in [0,+\infty).
  \end{equation*}
  \item The function $h:x\mapsto x^dH(x^{-d})$ is strictly convex and non-increasing on $(0,+\infty)$. 
\end{itemize}
\end{hypothesis}
Note that the property of superlinear growth at infinity implies that $E$ is lower semi-continuous with respect to the narrow convergence, see \cite[Remark 9.3.8]{AGS}. The assumption that $H(0) = 0$ and $h$ is convex and non-increasing implies that the energy $E$ is displacement convex, which is introduced by McCann \cite{McCann1997}, see also \cite{AGS, V1, V2}. 

\begin{hypothesis}\label{H2}
$H^{\prime\prime}>0$ for all $x\in(0,+\infty)$ and there exists a continuous function $f: (0,+\infty)\to [0,+\infty)$ such that $f(1)=1$ and 
\begin{equation*}
  H^{\prime\prime}(\alpha x)\geq f(\alpha)H^{\prime\prime}(x) \quad\text{ for all }x,\alpha\in (0,+\infty). 
\end{equation*}
\end{hypothesis}
Note that the typical energy \eqref{typical energy} with $\gamma+1-p>0$ satisfies both Hypothesis \ref{H1} and Hypothesis \ref{H2}. 

The rest of this paper is organized as follows. In Section 2, we introduce some notations and give the necessary background to introduce the continuum gradient flow and discrete gradient flow. In Section 3, we state the main result and then prove it. In Section 4, we prove the $\Gamma$-convergence of the discrete energy.

\section{Notations and Preliminaries}
We first recall some basic knowledge about the Wasserstein space and the curve of maximal slope which is used to define the gradient flow. 

Let $(X,d)$ be a Polish space. For $p\geq 1$, the space of all probability measures on $X$ with finite $p$-th moment is denoted by $\mathcal{P}_p(X)$.  
The ($p$-th) Wasserstein distance between two probability measures $\mu, \nu\in \mathcal{P}_p(X)$ is given by 
\begin{equation*}
  W_p(\mu,\nu):=\inf_{\pi}\left\{\int_{X\times X}d(x,y)^p d\pi(x,y)\right\}^{\frac{1}{p}},
\end{equation*}
where
\begin{equation*}
  \pi\in \prod(\mu, \nu)=\left\{\pi\in \mathcal{P}(X\times X):\pi(\cdot,X)=\mu, \pi(X,\cdot)=\nu\right\}.
\end{equation*}
We call $(\mathcal{P}_p(X), W_p)$ the $L^p$-Wasserstein space (or briefly $W_p$ space). Moreover, when the probability measures have smooth densities with respect to some reference measure $dm$, we denote the space by $\mathcal{P}_p^\infty(X,dm)$. Note that $(\mathcal{P}_p(X), W_p)$ is a complete metric space if $X$ is complete. For more properties of the Wasserstein space, one can refer to \cite{AGS, V1, V2}.  

For the case of $p=2$, Otto \cite{Otto2001} introduced the tangent space and the Riemannian metric on the $L^2$-Wasserstein space, which make the $L^2$-Wasserstein space becomes an infinite dimensional Riemannian manifold. In general, for $p\geq 1$, the tangent space of $\mathcal{P}^\infty_p(\mathbb R^n):=\mathcal{P}^\infty_p(\mathbb R^n,dx)$ can be defined as 
\begin{equation*}
  T_{\rho dx}\mathcal{P}_{p}^{\infty}(\mathbb R^n)
  =\left\{s=-\nabla\cdot (\rho|\nabla\phi|^{q-2}\nabla \phi):\phi\in C^{\infty}(\mathbb R^n),\int_{\mathbb R^n}|\nabla\phi(x)|^q \rho(x)dx<\infty \right\},  
\end{equation*} 
where $q$ is the conjugate exponent of $p$, i.e., $\frac{1}{p}+\frac{1}{q}=1$. That is the tangent space of $\mathcal{P}_p(\mathbb R^n)$ can be identified as (see \cite{AGS})
\begin{equation*}
T_{\mu}\mathcal{P}_p(\mathbb R^n):=\overline{\left\{j_q\left(\nabla\phi\right):\phi\in C_c^\infty(\mathbb R^n)\right\}}^{L^p(\mathbb R^n,d\mu)}, 
\end{equation*}
where $j_q: L^q(\mathbb R^n,d\mu)\to L^p(\mathbb R^n,d\mu)$ is defined by 
\begin{equation*}
v\mapsto j_q(v)=\left\{
\begin{aligned}
&|v|^{q-2}v&,\quad \text{ if } v\neq 0,\\
&0&,\quad \text{ if } v= 0.
\end{aligned} \right.
\end{equation*}
Let $E$ be the energy given by \eqref{energy}. If $E$ is regular enough, in view of the definition of the tangent space, the gradient flow on $\mathcal{P}_p(\mathbb R^n)$ can be defined by a solution to 
\begin{equation*}
  j_p(v_t)= -\nabla E(\rho_t). 
\end{equation*}
That is the gradient flow is given by \eqref{GF0}. In general, the gradient flow can be defined with the notion of subdifferential. See Definition \ref{GF1} below. On the other hand, as mentioned in \cite{CPSW}, the gradient flow formulation given in \eqref{GF0} is not the one allows the use of \cite[Theorem 2]{Serfaty}, which is the key tool of our proof. Thus we employ a more weaker notation of gradient flows which only uses a metric structure. The notion replacing gradient flows is then that of “curves of maximal slope”, which was introduced in \cite{DeGMT}. We follow here the self-contained presentation in \cite{AGS}. In the following, let $(X, d)$ be a complete metric space. $\phi$ denotes a proper functional from $X$ to $\mathbb R \cup \{+\infty\}$, i.e. 
\begin{equation*}
D(\phi):=\left\{x\in X: \phi(x)<+\infty\right\}\neq \emptyset, 
\end{equation*}
and $I$ denotes a bounded subinterval of $\mathbb R$. 

\begin{definition}[Absolute continuity]\label{AC}
We say that $v: I \rightarrow X$ is a $p$-absolutely continuous curve if there exists $m \in L^p(I)$ such that
$$
d(v(t), v(\tau)) \leq \int_\tau^t m(s) ds \quad \text { for all } \tau, t \in I \text { with } \tau \leq t.
$$
In this case we denote $v \in A C^p(I, X)$ and $v \in A C(I, X)$ if $p=1$.  
\end{definition} 

\begin{definition}[Metric derivative] 
Let $v: I \rightarrow X$ be a $p$-absolutely continuous curve, then 
$$
\left|v^{\prime}\right|_d(t):=\lim _{\tau \rightarrow t} \frac{d(v(\tau), v(t))}{|\tau-t|}
$$
exists for almost every $t \in I$ and is called the metric derivative of $v$. Moreover, $\left|v^{\prime}\right|_d$ is the smallest admissible function $m$ in Definition \ref{AC}.  
\end{definition}
Note that $v\in AC^p(I,X)$ is equivalent to $|v^\prime|_d\in L^p(I,X)$. 
 
\begin{definition}[Strong upper gradient] 
We call $g: X \rightarrow[0,+\infty]$ a strong upper gradient for $\phi$ if for every $v \in A C(I, X)$, we have that $g \circ v$ is a Borel function and
$$
|\phi(v(t))-\phi(v(\tau))| \leq \int_\tau^t g(v(s))\left|v^{\prime}\right|_d(s) \mathrm{d} s \quad \text { for all } \tau, t \in I \text { with } \tau \leq t. 
$$
\end{definition}
A candidate to be an upper gradient of $\phi$ is its slope: 
\begin{definition}[Local slope] 
We define the local slope of $\phi$ at $v \in D(\phi)$ by
$$
|\partial \phi|(v)=\limsup _{w \rightarrow v} \frac{(\phi(v)-\phi(w))_{+}}{d(v, w)},
$$
where the subscript $+$ denotes the positive part.
\end{definition}

If $\phi$ is $\lambda$-geodesically convex for some $\lambda\in\mathbb R$ and lower semi-continuous, then the local slope $|\partial\phi|$ is a strong upper gradient for $\phi$. See \cite[Corollary 2.4.10]{AGS}. The energy functional \eqref{energy} satisfying Hypothesis \ref{H1} is $0$-geodesically convex on the $L^p$-Wasserstein space. Thus $|\partial E|$ is a strong upper gradient for $E$. 
 
\begin{definition}[Curve of maximal slope] \label{CMS}
Let $g$ be a strong upper gradient for $\phi$. We say that $v\in AC^p(I, X)$ is a $p$-curve of maximal slope for $\phi$ with respect to $g$, if $\phi \circ v$ is almost everywhere equal to a non-increasing function $\varphi$ and
$$
\varphi^{\prime}(t) \leq-\frac{1}{p}\left|v^{\prime}\right|_d(t)^p-\frac{1}{q} g(v(t))^q \quad \text { for almost every } t \in I,
$$
where $q$ is the conjugate exponent of $p$.
\end{definition}

\begin{remark}
 When $v$ is a $p$-curve of maximal slope for a strong upper gradient $g$, we have $g\circ v|v^\prime|_d \in L^1(I)$, $\phi\circ v \in AC(I,\mathbb R\cup\{+\infty\})$, $\phi\circ v(t) = \varphi(t)$ for all $t \in I$, and $|v^\prime|_d(t)^p = g(v(t))^q = -\varphi^\prime(t) = -(\phi\circ v)^\prime(t)$ for almost every $t \in I$ (see \cite[Remark 1.3.3]{AGS}). 
\end{remark}

\subsection{Continuum gradient flow}
Let the energy functional $E$ satisfy Hypothesis \ref{H1}. We now define the continuum gradient flow on $\mathcal{P}_p(\Omega^d)$.

\begin{definition}[Continuum gradient flow] 
We say that $\rho \in AC^p([0,T],\mathcal{P}_p(\Omega^d))$ is a continuum gradient flow solution with initial condition $\rho_0 \in \mathcal{P}_p(\Omega^d)$ if it is a $p$-curve of maximal slope for $E$ with respect to $|\partial E|$ and $\rho(0)=\rho_0$.
\end{definition}
We recall another common way of defining a continuum gradient flow, which involves the notion of subdifferential. For the subdifferential calculus on the $L^p$-Wasserstein space, see \cite[Section 10.3]{AGS}. 

\begin{definition}\label{GF1}
We say that $\mu_t\in AC^p([0,T],\mathcal{P}_p(\Omega^d))$ is a solution to the gradient flow, if there exists a Borel vector field $v_t$ such that $v_t \in T_{\mu_t}\mathcal{P}_p(\Omega^d)$ for $L^1$-a.e. $t > 0$, $\|v_t\|_{L^p(\mu_t)} \in L^p[0, T]$, the continuity equation
\begin{equation*}
  \partial_t\mu_t+\nabla\cdot (v_t\mu_t) =0 \text{ in } \Omega^d\times[0,T]
\end{equation*}
holds in the sense of distribution, and 
\begin{equation*}
  j_p(v_t) \in -\partial E(\mu_t) ,\quad t\in[0,T],
\end{equation*} 
where $\partial E(\mu_t)$ means the subdifferential of $E$ at $\mu_t$.
\end{definition}

The energy functional $E$ satisfying Hypothesis \ref{H1} is regular in the sense of the Definition 10.3.9 of \cite{AGS}. By \cite[Theorem 11.1.3]{AGS}, the definition of curves of maximal slope (Definition \ref{CMS}) coincides with the gradient flow defined by Definition \ref{GF1} on the $L^p$-Wasserstein space. The displacement convexity and lower semi-continuity of $E$ imply the existence of such gradient flows (see \cite[Theorem 11.3.2]{AGS}). Moreover, the tangent vector $v_t$ to $\mu_t$ satisfies the minimal selection principle, i.e., 
\begin{equation*}
    j_p(v_t)=-\partial^\circ E(\mu_t), \text{ for }L^1\text{-a.e. } t>0,
\end{equation*}
where $\partial^\circ E(\mu_t)$ denotes the subset of elements of minimal norm in $\partial E(\mu_t)$, which reduces to a single point since the $L^p(\mu_t)$-norm is strictly convex if $p>1$. 

\subsection{Discrete gradient flow}
Following the particle method used in \cite{CPSW} to approximate the continuum gradient flow, we take any $N$ particles $x_1,\cdots,x_N$ in $\Omega^d$, $N\geq 2$, and we denote $\boldsymbol{x_N}=\left(x_1,\cdots,x_N\right)\in \Omega^{Nd}$. Define the set of empirical measures by
\begin{equation*}
\mathcal{A}_N(\Omega^d)=\left\{\mu\in \mathcal{P}_p(\Omega^d): \exists\left(x_1,\cdots,x_N\right)\in \Omega^{Nd}, \mu=\frac{1}{N}\sum_{i=1}^N\delta_{x_i}\right\}.
\end{equation*}
Let $B_i, i\in\{1,\cdots,N\}$ be the open balls of centre $x_i\in \Omega^d$ with radius $\frac{1}{2}\min\limits_{j\neq i}|x_i-x_j|$, where $|x_i-x_j|$ is the standard Euclidean distance between $x_i$ and $x_j$ on $\mathbb R^d$. Define
\begin{equation*}
  \rho_N=\frac{1}{N}\sum_{i=1}^N\frac{\chi_{B_i}}{|B_i|}\in \mathcal{P}_{ac,p}(\Omega^d),
\end{equation*}
where $|B_i|$ is the volume of $B_i$ with respect to the Lebesgue measure on $\mathbb R^d$. 

\begin{definition}[Discrete energy] 
We define the discrete energy $E_N: \mathcal{A}_N(\Omega^d)\to\mathbb R$ for all $\mu_N\in \mathcal{A}_N(\Omega^d)$ with particles $\boldsymbol{x_N}\in\Omega^{Nd}$ by 
\begin{equation}\label{dis energy}
E_N(\mu_N)=E(\rho_N)=\sum_{i=1}^N|B_i|H\left(\frac{1}{N|B_i|}\right).
\end{equation}
We define the discrete energy equivalently as a function of $\boldsymbol{x_N}\in \Omega^{Nd}$ by
\begin{equation*}
  \widetilde{E}_N(\boldsymbol{x_N})=E_N(\mu_N). 
\end{equation*}
\end{definition}

Now we introduce the weighted $p$-norm on $\mathbb R^{Nd}$. That is, for any $\boldsymbol{x_N}=(x_i,\cdots,x_N)\in\mathbb R^{Nd}$, $x_i=(x_i^1,\cdots,x_i^d)$ for $i\in\{1,\cdots,N\}$, 
\begin{equation*}
\|\boldsymbol{x_N}\|_{w,p}:=\left\{\frac{1}{N}\sum_{i=1}^N\sum_{j=1}^d |x_i^j|^p\right\}^{1/p}.
\end{equation*}
Then $(\mathbb R^{Nd},\|\cdot\|_{w,p})$ becomes a Banach space with the dual space $(\mathbb R^{Nd},\|\cdot\|_{w,q})$, where $\frac{1}{p}+\frac{1}{q}=1$. Moreover, the pair of $\boldsymbol{x_N}\in(\mathbb R^{Nd},\|\cdot\|_{w,p})$ and $\boldsymbol{y_N}\in(\mathbb R^{Nd},\|\cdot\|_{w,q})$ is given by 
\begin{equation*}
    \left( \boldsymbol{x_N},\boldsymbol{y_N}\right)_w=\frac{1}{N}\sum_{i=1}^{N}\sum_{j=1}^dx_i^jy_i^j.
\end{equation*}

Now we define the discrete gradient flow. As mentioned in \cite{CPSW}, to obtain the well-posedness of the discrete gradient flow, we restrict the framework to the case of $d=1$. In this case, the discrete energy $\widetilde{E}_N$ is convex on $\Omega^N$, which makes sure that the local slope is a strong upper gradient. 

By convention, in the rest of the paper, whenever particles $\boldsymbol{x_N}\in\Omega^N$ are considered, they are assumed to be distinct and sorted increasingly, i.e., $x_{i+1} > x_i$ for all $i\in\{1,\cdots, N-1\}$. We also assume the same boundary condition as in \cite{CPSW}, which construct two fictitious particles $x_0$ and $x_{N+1}$ to make sure the real particles stay in $\Omega$. We denote $\Delta x_i=x_i-x_{i-1}$ for $i\in\{1,\cdots, N+1\}$. 

\begin{definition}[Discrete gradient flow]
We say that $\mu_N\in AC^p([0,T],\mathcal{A}_N(\Omega))$ is a discrete gradient flow solution with initial condition $\mu_N^0 \in\mathcal{A}_N(\Omega)$, if it is a $p$-curve of maximal slope for $E_N$ with respect to $|\partial E_N |$, and if $\mu_N(0) =\mu_N^0$.
\end{definition}
By \cite[Proposition 2.13]{CPSW}, it is equivalent to define the discrete gradient flow as a solution to
\begin{equation}\label{dis GF}
  j_q \boldsymbol{x}^{\prime}(t)\in \partial_w \widetilde{E}_N(\boldsymbol{x}(t)), \quad \forall t\in [0,T],
\end{equation}
where $\boldsymbol{x} : [0, T ]\to \mathbb R^{N}$ and $\partial_w$ stands for the subdifferential of $\widetilde{E}_N$:
\begin{equation*}
  \partial_w \widetilde{E}_N(\boldsymbol{x})=\left\{\boldsymbol y\in\mathbb R^{N} \Big| \forall \boldsymbol z\in \mathbb R^{N}, E_N(\boldsymbol z)-E_N(\boldsymbol x)\geq \left(\boldsymbol y, \boldsymbol z- \boldsymbol x\right)_w\right\}.
\end{equation*}
Moreover, by \cite[Proposition 1.4.4, Corollary 2.4.12]{AGS}, we have the following 
\begin{proposition}
There exists a solution to the discrete gradient flow inclusion \eqref{dis GF}. Furthermore, any solution $\boldsymbol{x_N}$ satisfies $j_q \boldsymbol{x}^{\prime}(t)=\partial_w^0 \widetilde{E}_N(\boldsymbol{x}(t))$ for almost every $t\in[0,T]$.    
\end{proposition}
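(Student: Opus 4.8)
The intended route is to recognize \eqref{dis GF} as the Banach-space transcription, on the reflexive space $(\mathbb{R}^{N},\|\cdot\|_{w,p})$, of the metric notion of a $p$-curve of maximal slope for $E_N$, and then to invoke the general theory of \cite{AGS} once the structural hypotheses on $\widetilde E_N$ are checked. By \cite[Proposition 2.13]{CPSW}, producing a solution of \eqref{dis GF} is the same as producing a $p$-curve of maximal slope for $E_N$ with respect to $|\partial E_N|$, so the two assertions of the proposition become, respectively, an existence statement and a selection statement for such curves.

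First I would record the properties of the ambient space and of $\widetilde E_N$. For $1<p<\infty$ the finite-dimensional space $(\mathbb{R}^{N},\|\cdot\|_{w,p})$ is reflexive and uniformly convex, its dual is $(\mathbb{R}^{N},\|\cdot\|_{w,q})$, and $j_q,j_p$ are the mutually inverse duality maps between the two; strict convexity of $\|\cdot\|_{w,q}$ for $q<\infty$ is what will force uniqueness of the minimal-norm selection below. On the functional side, convexity of $\widetilde E_N$ on the ordered configuration set has already been noted for $d=1$, and $\widetilde E_N\ge 0$ because $H\ge 0$, so $\widetilde E_N$ is proper and bounded below. The crucial point is lower semicontinuity together with a barrier behaviour: when two consecutive particles approach each other some $|B_i|\to 0$, and since $H$ has superlinear growth at infinity while $H(0)=0$, the corresponding term $|B_i|H(1/(N|B_i|))$ in \eqref{dis energy} diverges. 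Hence the extension of $\widetilde E_N$ by $+\infty$ outside the open set of distinct, increasingly ordered particles is lower semicontinuous, and every finite sublevel set meets that open set only.

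With these facts, existence follows from the minimizing-movements (De Giorgi) scheme for proper, convex, lower semicontinuous functionals bounded below on a reflexive Banach space, in the form of \cite[Corollary 2.4.12]{AGS}: each discrete-time step, which minimizes $\boldsymbol z\mapsto\widetilde E_N(\boldsymbol z)+\frac{1}{p\tau^{p-1}}\|\boldsymbol z-\boldsymbol w\|_{w,p}^{p}$ with $\boldsymbol w$ the previous iterate and $\tau$ the step size, admits a minimizer by lower semicontinuity and coercivity of the penalization (even though $\widetilde E_N$ itself need not be coercive when $\Omega=\mathbb{R}$), the standard a priori estimates produce an $AC^p$ limit curve, and this limit is a $p$-curve of maximal slope for $E_N$. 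Two geometric facts then guarantee that the limit is a genuine discrete gradient flow: monotonicity of the energy along the flow together with the barrier property forbids any collision of particles, so the curve remains in the admissible configuration set, while $AC^p$-regularity on $[0,T]$ bounds the total displacement and rules out escape to infinity in the case $\Omega=\mathbb{R}$. Passing back through \cite[Proposition 2.13]{CPSW} yields a solution of \eqref{dis GF}.

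For the minimal-selection statement I would appeal to \cite[Proposition 1.4.4]{AGS}: in a reflexive Banach space every $p$-curve of maximal slope of a convex functional satisfies the differential inclusion realized by the element of the subdifferential of minimal dual norm, and this element is unique because $\|\cdot\|_{w,q}$ is strictly convex. Applied to any solution $\boldsymbol{x_N}$ of \eqref{dis GF}, which by the equivalence is such a curve, this gives $j_q\boldsymbol{x}'(t)=\partial_w^0\widetilde E_N(\boldsymbol{x}(t))$ for almost every $t$. The step I expect to be the main obstacle is precisely the barrier/admissibility argument of the second and third paragraphs: one must verify carefully that finite energy confines the configuration to the open set of distinct ordered particles and that this confinement is preserved along the whole minimizing-movement limit, since only there are $\widetilde E_N$, its subdifferential, and the reduction of \cite{CPSW} meaningful; this is exactly where the superlinear growth of $H$ and $H(0)=0$ from Hypothesis \ref{H1} enter.
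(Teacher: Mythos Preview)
Your proposal is correct and follows exactly the paper's approach: the paper does not give a proof but simply records that the proposition follows from \cite[Proposition 1.4.4, Corollary 2.4.12]{AGS}, which are precisely the two results you invoke (the latter for existence via minimizing movements, the former for the minimal-selection identity). Your additional elaboration on the hypotheses---convexity and lower semicontinuity of $\widetilde E_N$, the barrier from superlinear growth of $H$, and reflexivity/strict convexity of the ambient space---fills in details the paper leaves implicit, but the route is the same.
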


\section{main results}
Before stating the main result, we recall some definitions introduced in \cite{CPSW}.
\begin{definition}[Smooth set]
We define the subset $\mathcal{G}(\Omega)$ of $\mathcal{P}_{ac,p}(\Omega)$ as follows. We write $\rho \in \mathcal{G}(\Omega)$ if there exists $r > 0$ such that all the items below hold:
\begin{itemize}
    \item $\mathrm{supp}\rho=[-r,r]$; $\rho|_{\mathrm{supp}\rho}\in C^1(\mathrm{supp}\rho)$; $\min_{\mathrm{supp}\rho}\rho>0$;
    \item if $\Omega=[-l,l]$, then $r=l$.
\end{itemize}
\end{definition}

\begin{definition}[recovery sequence and well-preparedness]\label{well-pre}
Let $\rho\in\mathcal{P}_{p}(\Omega)$. Any $(\mu_N)_{N\geq2}$ with $\mu_N \in \mathcal{A}_N(\Omega)$ for all $N\geq 2$ such that $\mu_N\rightharpoonup\rho$ narrowly as $N\to\infty$ and $\limsup_{N\to\infty} E_N(\mu_N) \leq E(\rho)$ is said to be a recovery sequence for $\rho$. Let $(\boldsymbol{x_N})_{N\geq 2}$ be the particles of $(\mu_N)_{N\geq2}$. We say that $(\mu_N)_{N\geq2}$ is well-prepared for $\rho$ if it is a recovery sequence for $\rho$ and there exist $a_1,a_2 > 0$ such that $a_1/N \leq \Delta x_i \leq a_2/N$ for all $i\in\{2,\cdots,N\}$ and all $N\geq 2$; if $\rho\in \mathcal{G}(\Omega)$, we moreover require $x_N =-x_1 = r$.  
\end{definition}

Now we state our main result. 
\begin{theorem}[Main theorem]\label{MT}
Assume $H$ satisfies Hypothesis \ref{H1}. Suppose $\mu_N \in AC^p\left([0, T], \mathcal{A}_{N}(\Omega)\right)$, with particles $\boldsymbol{x_N} \in AC^p\left([0, T], \Omega^N\right)$, is a discrete gradient flow solution with initial condition $\mu_N^0 \in \mathcal{A}_{N}(\Omega)$, with particles $\boldsymbol{x_N^0} \in \Omega^N$. Let $\rho\in \mathcal{G}(\Omega)$, and assume that $(\mu_0)$ is well-prepared for $\rho$ according to Definition \ref{well-pre}. Then $\left(\mu_N(t)\right)_{N\geq 2}$ is tight and there exists a subsequence $(\mu_{N_k}(t))_{N\geq 2}$ and a probability measure $\rho \in AC^p\left([0, T], \mathcal{P}_p(\Omega)\right)$ such that
\begin{equation*}
  \mu_{N_k}(t) \rightharpoonup \rho(t) \text{ narrowly }
\end{equation*}
as $k \rightarrow \infty$ for all $t \in[0, T]$. Moreover, if $\Omega=[-\ell, \ell]$ and $H$ satisfies Hypothesis \ref{H2}, then $\rho$ is a continuum gradient flow and it holds\footnote{When we write the metric derivative with respect to the $W_p$-distance, we omit the subscript. }
\begin{equation}\label{conclusion}
\begin{cases}
\lim\limits_{k \rightarrow \infty}\left|\mu_{N_k}^{\prime}\right|=\left|\rho^{\prime}\right| \quad &\text { in } L^p([0, T]), \\
\lim\limits_{k \rightarrow \infty} E_{N_k}\left(\mu_{N_k}(t)\right)=E(\rho(t)) \quad &\text { for all } t \in[0, T] ,\\
\lim\limits_{k \rightarrow \infty}\left|\partial E_{N_k}\left(\mu_{N_k}\right)\right|=|\partial E(\rho)| \quad &\text { in } L^p([0, T]).
\end{cases} 
\end{equation}
In particular, assuming $\Omega=[-l,l]$. If $p=2$ or $E$ is given by \eqref{typical energy} with $\gamma+1-p>0$, then $\mu_N(t)$ narrowly converges to $\rho(t)$ for all $t\in[0,T]$ and \eqref{conclusion} holds for whole sequence $\mu_N(t)$ and $E_N$. 
\end{theorem}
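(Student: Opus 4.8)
The plan is to deduce the result from Serfaty's abstract theorem on the convergence of curves of maximal slope \cite[Theorem 2]{Serfaty}, whose hypotheses decompose into a compactness statement, the $\Gamma$-convergence of the energies, and two ``$\liminf$'' inequalities for the metric velocities and the metric slopes. The starting point is the energy dissipation identity carried by each discrete flow: since $\mu_N$ is a $p$-curve of maximal slope for $E_N$, the Remark after Definition \ref{CMS} gives, for every $t\in[0,T]$,
\begin{equation*}
E_N(\mu_N(0)) - E_N(\mu_N(t)) = \frac{1}{p}\int_0^t |\mu_N'|^p\,ds + \frac{1}{q}\int_0^t |\partial E_N|(\mu_N(s))^q\,ds.
\end{equation*}
Because $H\ge 0$ forces $E_N\ge 0$, and since well-preparedness yields $\limsup_N E_N(\mu_N(0))\le E(\rho(0))<\infty$, the right-hand side is bounded uniformly in $N$; in particular $\int_0^T|\mu_N'|^p\,ds$ is bounded. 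Together with tightness of $\{\mu_N(t)\}$ (automatic on the bounded set $\Omega=[-\ell,\ell]$, and obtained from a uniform $p$-th moment bound in the unbounded case), the refined Arzel\`a--Ascoli theorem \cite[Proposition 3.3.1]{AGS} produces a subsequence $\mu_{N_k}$ converging, uniformly in $W_p$ and hence narrowly, to some $\rho\in AC^p([0,T],\mathcal P_p(\Omega))$; this establishes the first, unconditional, part of the statement.

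Next I would verify the three structural inputs needed to pass to the limit, all for the limit $\rho$ obtained from the extracted subsequence. First, the $\Gamma$-$\liminf$ inequality $\liminf_k E_{N_k}(\mu_{N_k}(t))\ge E(\rho(t))$ follows from the lower-semicontinuity half of the $\Gamma$-convergence proved in Section 4 (using Hypothesis \ref{H1}), while well-preparedness upgrades this to exact convergence $E_{N_k}(\mu_{N_k}(0))\to E(\rho(0))$ at the initial time. Second, using the crucial fact that for sorted one-dimensional empirical measures $W_p(\mu_N,\nu_N)=\|\boldsymbol{x_N}-\boldsymbol{y_N}\|_{w,p}$---so that the discrete metric derivative coincides with the $W_p$ metric derivative of the curve $\mu_N(\cdot)$---the lower semicontinuity of the $W_p$-action under uniform convergence of curves gives $\liminf_k\int_s^t|\mu_{N_k}'|^p\,ds\ge\int_s^t|\rho'|^p\,ds$. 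Third, and this is where Hypothesis \ref{H2} and the restriction $\Omega=[-\ell,\ell]$ enter, one must prove the slope lower bound $\liminf_k\int_0^t|\partial E_{N_k}|(\mu_{N_k})^q\,ds\ge\int_0^t|\partial E|(\rho)^q\,ds$. I expect this to be the main obstacle: the discrete slope is computed intrinsically on $\mathcal A_N(\Omega)$ as a weighted $\ell^q$-sum of differences of $H'$ evaluated at the discrete densities attached to neighbouring interparticle gaps, and one must recognize this sum as a Riemann-type approximation of the continuum slope $\big(\int_\Omega|\partial_x H'(\rho)|^q\rho\,dx\big)^{1/q}$. Controlling the error in this identification---in particular comparing $H'$-increments across gaps of comparable but unequal size---is exactly what the rescaling bound $H''(\alpha x)\ge f(\alpha)H''(x)$ of Hypothesis \ref{H2} is designed to make uniform in $N$.

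Granting these three inputs, I would run the standard squeeze. Taking $\liminf_k$ in the discrete energy identity and using the initial-energy convergence on the left gives
\begin{equation*}
E(\rho(0))-E(\rho(t)) \ge \frac{1}{p}\int_0^t|\rho'|^p\,ds + \frac{1}{q}\int_0^t|\partial E|(\rho)^q\,ds.
\end{equation*}
On the other hand $|\partial E|$ is a strong upper gradient for $E$ (noted after the Local slope definition, since $E$ is $0$-geodesically convex and lower semicontinuous), so the upper gradient inequality together with Young's inequality yields the reverse estimate $E(\rho(0))-E(\rho(t))\le\frac1p\int_0^t|\rho'|^p\,ds+\frac1q\int_0^t|\partial E|(\rho)^q\,ds$. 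Hence every inequality is an equality, so $\rho$ is a $p$-curve of maximal slope, i.e.\ a continuum gradient flow. The same squeeze forces the three separate $\liminf$'s to be limits for every $t$, giving $E_{N_k}(\mu_{N_k}(t))\to E(\rho(t))$ and convergence of the time integrals of $|\mu_{N_k}'|^p$ and $|\partial E_{N_k}|^q$; combined with the weak-$L^p$ bounds from the velocity and slope inequalities, which identify the weak limits as $|\rho'|$ and $|\partial E|(\rho)$, convergence of the norms promotes via uniform convexity of $L^p$ ($p>1$) to the strong $L^p([0,T])$ convergences asserted in \eqref{conclusion}.

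Finally, for the ``in particular'' clause I would argue by uniqueness of the limit. When $p=2$, the $0$-geodesic convexity of $E$ makes the continuum gradient flow issuing from $\rho(0)$ unique through the contraction/EVI theory of \cite{AGS}; when $E$ is the typical energy \eqref{typical energy} with $\gamma+1-p>0$, the limiting equation is Leibenson's doubly nonlinear diffusion \eqref{Leibenson}, for which weak solutions are unique (cf.\ \cite{Vazquez, IMJ}). In either case the limit $\rho$ is independent of the extracted subsequence, so a standard subsequence-of-subsequence argument shows that the whole sequence $\mu_N(t)$ converges narrowly to $\rho(t)$ for all $t\in[0,T]$ and that \eqref{conclusion} holds along the full sequence $E_N$.
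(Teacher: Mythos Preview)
Your proposal is correct and follows essentially the same route as the paper: reduce to Serfaty's theorem via tightness, the three $\liminf$ conditions (C1)--(C3), and then invoke uniqueness of the continuum flow for the final clause. The slope inequality (C3), which you correctly flag as the main obstacle but only sketch, is indeed where the paper concentrates almost all of its effort---it is proved pointwise in $t$ (not just in the time-integrated form you state) by constructing a piecewise-monotone interpolant $\widetilde\rho_N$ from the particle configuration and establishing the chain $\liminf_N g_N(\mu_N)\ge\liminf_N g(\widetilde\rho_N)\ge g(\rho)$, with Hypothesis~\ref{H2} entering exactly as you anticipate.
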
 

\begin{remark}
The first part of this theorem is the tightness of $\left(\mu_t^N\right)_{N\geq 2}$ which will be proved in Proposition \ref{tightness prop}. Then by Prohorov’s theorem, $\left(\mu_t^N\right)_{N\geq 2}$ is narrowly sequentially compact. Using Theorem \ref{Serfaty thm}, we prove that the sequential limit $\rho$ is a solution to the continuum gradient flow. Moreover, if we know the uniqueness of the solution to the continuum gradient flow, which is established for the case of $p=2$ (see \cite[Theorem 11.1.4]{AGS}), $q$-heat equation \eqref{q-heat} (see \cite[Chapter 11]{Vazquez} and \cite{Kell}) and the Leibenson's equation \eqref{Leibenson} (see \cite{IMJ}), then we can obtain that the whole sequence of $\mu_N$ narrowly converges to $\rho$ and \eqref{conclusion} holds for whole sequence $\mu_N(t)$ and $E_N$.  
\end{remark}

\begin{remark}
In Theorem \ref{MT}, it can actually be proved that the convergence of $\mu_{N_k}(t)$ to $\rho(t)$ is stronger than narrowly convergence. Indeed, we will prove $\mu_N(t)$ has bounded $p$-th moment uniformly in $N$ and $t\in[0,T]$. See the proof of Proposition \ref{tightness prop}. Thus $\mu_{N_k}(t)$ converges to $\rho(t)$ in $W_r$-distance for all $1< r\leq p$ and $t\in[0,T]$, see e.g. \cite[Proposition 7.1.5]{AGS}.    
\end{remark}

To prove this theorem, we use the following theorem proved by Serfaty in \cite{Serfaty}, which is the $\Gamma$-convergence of gradient flow on metric spaces. 
\begin{theorem}[\cite{Serfaty}]\label{Serfaty thm}
Let $\mu_N\in AC^p([0,T],\mathcal{A}_N(\Omega))$ be a discrete gradient flow. Assume that $\mu_N(t)\rightharpoonup \rho(t)$ narrowly as $N \to\infty$ for all $t \in [0,T]$ for some $\rho\in AC^p([0,T],\mathcal{P}_p(\Omega))$. Furthermore, suppose that $(\mu_N(0))_{N\geq2}$ is a recovery sequence for $\rho(0)$, and that the following conditions hold for all $t\in [0, T ]$.
\begin{itemize}
  \item [(C1)] \qquad$\liminf\limits_{N \rightarrow \infty}\int_0^t\left|\mu_N^{\prime}\right|(s)^2ds\geq \int_0^t\left|\rho^{\prime}\right|(s)^2ds$.
  \item [(C2)] \qquad$\liminf\limits_{N \rightarrow \infty} E_N\left(\mu_N(t)\right)\geq E(\rho(t))$.
  \item [(C3)]\qquad$\liminf\limits_{N \rightarrow \infty}\left|\partial E_N\right|\left(\mu_N(t)\right)\geq |\partial E|(\rho(t))$. 
\end{itemize}
Then $\rho$ is a continuum gradient flow and \eqref{conclusion} holds for $E_N$ and $\mu_N$.  
\end{theorem}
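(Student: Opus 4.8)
The plan is to run Serfaty's energy–dissipation argument: pass to the limit in the discrete energy–dissipation \emph{equality} to obtain the continuum energy–dissipation \emph{inequality}, and then use that $|\partial E|$ is a strong upper gradient for $E$ to supply the reverse inequality, which forces equality and is exactly the curve-of-maximal-slope property. Since $\mu_N$ is a $p$-curve of maximal slope for $E_N$ with respect to $|\partial E_N|$ and $\widetilde{E}_N$ is convex on $\Omega^N$ (so that $|\partial E_N|$ is a genuine strong upper gradient), the Remark following Definition~\ref{CMS} gives, for every $t$,
\[
E_N(\mu_N(0)) - E_N(\mu_N(t)) = \int_0^t \Bigl(\tfrac1p |\mu_N'|(s)^p + \tfrac1q |\partial E_N|(\mu_N(s))^q\Bigr)\,ds.
\]
This identity is the only structural input from the discrete flow; everything else is a limit passage.

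First I would bound the two sides separately. For the right-hand side, (C1) yields the lower semicontinuity of the velocity term, $\liminf_N \int_0^t |\mu_N'|^p\,ds \geq \int_0^t |\rho'|^p\,ds$, while (C3) holds pointwise in $t$, so Fatou's lemma upgrades it to $\liminf_N \int_0^t |\partial E_N|^q\,ds \geq \int_0^t |\partial E|^q\,ds$; adding these (superadditivity of $\liminf$) bounds the dissipation integral from below by $\int_0^t(\tfrac1p|\rho'|^p + \tfrac1q|\partial E|^q)\,ds$. For the left-hand side, the recovery-sequence hypothesis gives $\limsup_N E_N(\mu_N(0)) \leq E(\rho(0))$ and (C2) gives $\liminf_N E_N(\mu_N(t)) \geq E(\rho(t))$, whence $\limsup_N[E_N(\mu_N(0)) - E_N(\mu_N(t))] \leq E(\rho(0)) - E(\rho(t))$. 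Chaining these through the discrete identity produces the continuum inequality
\[
E(\rho(0)) - E(\rho(t)) \geq \int_0^t \Bigl(\tfrac1p |\rho'|(s)^p + \tfrac1q |\partial E|(\rho(s))^q\Bigr)\,ds.
\]

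Next I would close the loop. Under Hypothesis~\ref{H1} the energy $E$ is $0$-geodesically convex and lower semicontinuous, so $|\partial E|$ is a strong upper gradient for $E$ (\cite[Corollary 2.4.10]{AGS}); this gives the reverse estimate $E(\rho(0)) - E(\rho(t)) \leq \int_0^t |\partial E|(\rho)\,|\rho'|\,ds \leq \int_0^t(\tfrac1p|\rho'|^p + \tfrac1q|\partial E|^q)\,ds$, the last step by Young's inequality. Hence both inequalities are equalities; in particular Young's inequality is saturated, so $|\rho'|^p = |\partial E|(\rho)^q$ a.e. and $(E\circ\rho)'(t) = -\tfrac1p|\rho'|^p - \tfrac1q|\partial E|(\rho)^q$, i.e.\ $\rho$ is a $p$-curve of maximal slope for $E$ — a continuum gradient flow.

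Finally, \eqref{conclusion} follows because the above sandwich collapses every inequality to an equality in the limit. Applying (C2) at $t=0$ together with the recovery bound gives $E_N(\mu_N(0)) \to E(\rho(0))$, and the discrete identity then forces $E_N(\mu_N(t)) \to E(\rho(t))$ for every $t$. Consequently the dissipation integral converges, $\int_0^t(\tfrac1p|\mu_N'|^p + \tfrac1q|\partial E_N|^q) \to \int_0^t(\tfrac1p|\rho'|^p + \tfrac1q|\partial E|^q)$; since each summand is already bounded below by its limit, each converges separately, so $\int_0^t|\mu_N'|^p \to \int_0^t|\rho'|^p$ and $\int_0^t|\partial E_N|^q \to \int_0^t|\partial E|^q$ for all $t$. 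The last upgrade — from convergence of these integrals to strong $L^p([0,T])$ convergence of $|\mu_N'|$ and of the slopes — is the part I expect to be most delicate: $|\mu_N'|$ is bounded in $L^p$, so a subsequence converges weakly to some $m$; the lower semicontinuity of the metric derivative along narrowly converging curves (as in \cite{AGS}) forces $m \geq |\rho'|$ pointwise, while the norm convergence forces $\|m\|_{L^p} = \||\rho'|\|_{L^p}$, so $m = |\rho'|$ and, by the Radon–Riesz property of the uniformly convex space $L^p$ ($1<p<\infty$), the convergence is strong. The same argument applied to $|\partial E_N|$ gives the remaining convergence in \eqref{conclusion}.
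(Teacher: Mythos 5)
Your argument is correct and is essentially the proof the paper relies on: the paper does not prove this theorem itself but quotes it from Serfaty, and your energy--dissipation sandwich (the discrete energy identity from the maximal-slope property with $|\partial E_N|$ a strong upper gradient, the lower bounds from (C1), Fatou applied to (C3), (C2) and the recovery sequence, the reverse continuum inequality via the strong-upper-gradient property of $|\partial E|$ and Young's inequality, then the collapse of all inequalities and the weak-convergence-plus-norm-convergence/Radon--Riesz upgrade to strong convergence) is precisely Serfaty's scheme, with every delicate step --- separating the two dissipation terms via superadditivity, and identifying the weak limit $m$ with $|\rho'|$ through the pointwise domination $m\geq|\rho'|$ from lower semicontinuity of $W_p$ --- handled correctly. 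Note only that you silently (and correctly) read the exponent $2$ in (C1) as $p$ --- the paper's statement carries this over as a typo from the $p=2$ setting of \cite{CPSW} --- and that your final upgrade naturally yields strong convergence of the slopes in $L^q([0,T])$ rather than the $L^p([0,T])$ written in \eqref{conclusion}, again the natural correction.
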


\subsection{Tightness and condition on the metric derivatives}
Now we prove the first part of the Main theorem. 
\begin{proposition}\label{tightness prop}
Let $\Omega=\mathbb R$ or $\Omega=[-l,l]$ for fixed $l\in\mathbb R$. Assume $H$ satisfies Hypothesis \ref{H1}. Let $(\mu_N)_{N\geq2}$ be as in Theorem \ref{MT}. Then $(\mu_N)_{N\geq2}$ is tight in $AC^p([0, T ], \mathcal{P}_p(\Omega))$. Moreover, there exists a subsequence $\mu_{N_k}$ narrowly converges to some $\rho\in AC^p([0, T ], \mathcal{P}_p(\Omega))$ as $k \to\infty$ for all $t \in [0, T ]$. Furthermore, $(C1)$ holds for this subsequence. 
\end{proposition}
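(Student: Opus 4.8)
The plan is to read off every bound from the energy-dissipation inequality built into the notion of a curve of maximal slope, and then to feed these bounds into the refined Ascoli--Arzelà compactness theorem \cite[Proposition 3.3.1]{AGS}. First I would integrate the defining inequality of Definition \ref{CMS} for the discrete gradient flow. Since $H\geq 0$ forces $E_N\geq 0$, and since $(\mu_N(0))_{N\ge 2}$ is a recovery sequence so that $\limsup_N E_N(\mu_N(0))\leq E(\rho_0)<\infty$, integration over $[0,t]$ gives
\begin{equation*}
E_N(\mu_N(t))+\frac1p\int_0^t|\mu_N'|(s)^p\,ds+\frac1q\int_0^t|\partial E_N|(\mu_N(s))^q\,ds\leq E_N(\mu_N(0)),
\end{equation*}
which yields a bound on $E_N(\mu_N(t))$ uniform in $N$ and $t$ and, crucially, a uniform bound on $\|\,|\mu_N'|\,\|_{L^p([0,T])}$.

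Second I would establish a uniform $p$-th moment bound. Because $(\mu_N)$ is well-prepared for $\rho\in\mathcal G(\Omega)$, Definition \ref{well-pre} forces $x_1=-r$ and $x_N=r$, so $\mathrm{supp}\,\mu_N(0)\subset[-r,r]$ and $\sup_N W_p(\mu_N(0),\delta_0)\leq r<\infty$ (for $\Omega=[-l,l]$ everything lies in a fixed compact set regardless). Using the $p$-absolute continuity estimate together with Hölder's inequality,
\begin{equation*}
W_p(\mu_N(t),\mu_N(0))\leq\int_0^t|\mu_N'|(s)\,ds\leq T^{1/q}\Big(\int_0^T|\mu_N'|(s)^p\,ds\Big)^{1/p},
\end{equation*}
and then the triangle inequality $W_p(\mu_N(t),\delta_0)\leq W_p(\mu_N(t),\mu_N(0))+W_p(\mu_N(0),\delta_0)$ gives $\sup_{N,t}\int|x|^p\,d\mu_N(t)<\infty$. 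By Markov's inequality this uniform moment bound makes $\{\mu_N(t)\}_N$ tight for every fixed $t$, while the uniform $L^p$ bound on $|\mu_N'|$ makes the family equi-$p$-absolutely continuous. Applying \cite[Proposition 3.3.1]{AGS} I then extract a subsequence $\mu_{N_k}$ converging narrowly, for every $t\in[0,T]$, to a limit curve $\rho\in AC^p([0,T],\mathcal P_p(\Omega))$; this is exactly the asserted tightness in $AC^p([0,T],\mathcal P_p(\Omega))$.

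Finally, to obtain (C1), after a further extraction I let $|\mu_{N_k}'|\rightharpoonup\ell$ weakly in $L^p([0,T])$, which is possible by the uniform $L^p$ bound. Lower semicontinuity of $W_p$ with respect to narrow convergence gives, for $\tau\leq t$,
\begin{equation*}
W_p(\rho(t),\rho(\tau))\leq\liminf_k W_p(\mu_{N_k}(t),\mu_{N_k}(\tau))\leq\liminf_k\int_\tau^t|\mu_{N_k}'|(s)\,ds=\int_\tau^t\ell(s)\,ds,
\end{equation*}
so $\ell$ is an admissible upper bound for the metric speed and hence $|\rho'|\leq\ell$ almost everywhere; weak lower semicontinuity of the $L^p$-norm then yields
\begin{equation*}
\int_0^t|\rho'|(s)^p\,ds\leq\int_0^t\ell(s)^p\,ds\leq\liminf_k\int_0^t|\mu_{N_k}'|(s)^p\,ds,
\end{equation*}
which is condition (C1).

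I expect the main obstacle to be this last step: correctly matching the weak $L^p$-limit $\ell$ of the discrete metric derivatives with the metric derivative of the limit curve. This hinges on combining the pointwise-in-time narrow limit with the weak time-integral limit, and on the lower semicontinuity of $W_p$ along the narrowly convergent subsequence; securing the uniform $p$-th moment bound in the unbounded case $\Omega=\mathbb R$ (so that the limit genuinely lives in $\mathcal P_p$) is the other point that requires care, though well-preparedness reduces it to the initial support estimate above.
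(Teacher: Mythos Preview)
Your proposal is correct and follows essentially the same route as the paper: uniform $p$-th moment bound $\Rightarrow$ tightness $\Rightarrow$ compactness (you package this via \cite[Proposition~3.3.1]{AGS}, the paper via Prokhorov plus a separate $AC^p$ argument), then weak $L^p$-compactness of $|\mu_N'|$ together with narrow lower semicontinuity of $W_p$ to obtain $|\rho'|\le\ell$ and hence (C1). Your derivation of the bound on $W_p(\mu_N(t),\mu_N(0))$ directly via H\"older on $\int_0^t|\mu_N'|$ is in fact somewhat cleaner than the paper's, which instead passes through the differentiation formula for $t\mapsto W_p^p(\mu_t,\sigma)$ and the slope-decay estimate $t\,|\partial E_N|^q(\mu_N(t))\le E_N(\mu_N^0)$ before arriving at the same moment bound.
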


\begin{proof}
For any $\sigma\in\mathcal{P}_p(\Omega)$, let $\mu_t$ be an absolutely continuous curve on $\mathcal{P}_p(\Omega)$ with velocity $v_t$. Then the differentiability of $W_p$ gives 
\begin{equation*}
\frac{1}{p}\frac{d}{dt}W_p^p(\mu_t, \sigma)
=\int_{\Omega^2}\langle v_t(x_1),j_p(x_1-x_2)\rangle d\gamma_t(x_1.x_2),\quad L^1 \text{-a.e. } t \in (0, +\infty),
\end{equation*}
where $\gamma_t\in \Gamma_o(\mu_t, \sigma)$ is an optimal transport plan form $\mu_t$ to $\sigma$. Applying the H\"older's inequality, it holds 
\begin{eqnarray*}
\frac{1}{p}\frac{d}{dt}W_p^p(\mu_t, \sigma)
&\leq &\|v_t\|_{L^p(\Omega,\mu_t)}\left(\int_{\Omega^2}\left|j_p(x_1-x_2)\right|^qd\gamma_t(x_1,x_2)\right)^{1\over q}\\
&=&\|v_t\|_{L^p(\Omega,\mu_t)}\left(\int_{\Omega^2}\left|x_1-x_2\right|^p d\gamma_t(x_1,x_2)\right)^{\frac{1}{p}\cdot\frac{p}{q}}\\
&=&\|v_t\|_{L^p(\Omega,\mu_t)}\left(W_p(\mu_t,\sigma)\right)^{p-1},
\end{eqnarray*}
Let $\mu_t$ be a $p$-curve of maximal slope for $E$ with respect to $|\partial E|$. Then 
\begin{equation*}
    |\mu_t^\prime|=\|v_t\|_{L^p(\Omega,\mu_t)} \quad \text{ for } L\text{-a.e. }t\in [0,T].
 \end{equation*} 
By \cite[Remark 2.4.17]{AGS}, we have the following estimate
\begin{equation*}
    t|\partial E|^q(\mu_t)\leq E(\mu_0)-\inf_{\mu\in D(E)} E(\mu)\leq E(\mu_0).
\end{equation*}
Since $|\partial E|$ is a strong upper gradient of $E$, it holds 
\begin{equation*}
    |\partial E|^q(\mu_t)=|\mu_t^\prime|^p.
\end{equation*}
Thus we have 
\begin{equation*}
\frac{1}{p}\frac{d}{dt}W_p^p(\mu_t, \sigma)\leq \left(\frac{E(\mu_0)}{t}\right)^{\frac{1}{p}}\left(W_p(\mu_t,\sigma)\right)^{p-1}.
\end{equation*}
Then we can derive  
\begin{equation*}
W_p(\mu_t, \sigma)\leq qE(\mu_0)^{1\over p}\left(t^{1\over q}-t_0^{1\over q}\right)+W_p(\mu_{t_0},\sigma), \quad \forall 0\leq t_0\leq t\leq T.
\end{equation*}
In particular, we choose $t_0=0$ and $\sigma=\mu_0$, then 
\begin{equation*}
W_p(\mu_t, \mu_0)\leq qE(\mu_0)^{1\over p}t^{1\over q}\leq qE(\mu_0)^{1\over p}T^{1\over q}.
\end{equation*}
Let $\mu_t^N$ be a gradient flow of $E_N$ with initial value $\mu_0^N$. Then 
\begin{equation*}
W_p(\mu_t^N, \mu_0^N)\leq qE_N(\mu_0^N)^{1\over p}T^{1\over q}.
\end{equation*}
Assuming there exists a constant $e_0$ such that $E_N(\mu_0^N)\leq e_0$, then 
\begin{equation*}
W_p(\mu_t^N, \mu_0^N)\leq qe_0^{1\over p}T^{1\over q}.
\end{equation*}
Note that 
\begin{eqnarray*}
M_p(\mu_t^N)&=&W_p^p(\mu_t^N,\delta_0)\leq \left(W_p(\mu_t^N,\mu_0^N)+W_p(\mu_0^N,\delta_0)\right)^p\\
&\leq& 2^{p-1} W_p^p(\mu_t^N,\mu_0^N)+2^{p-1} W_p^p(\mu_0^N,\delta_0)\\
&\leq &2^{p-1}q^p e_0 T^{p\over q}+2^{p-1} M_p(\mu_0^N),
\end{eqnarray*}
where $M_p$ denotes the $p$-th moment. Assume there exists a constant $m_0$ such that $M_p(\mu_0^N)\leq m_0$, and we have 
\begin{equation}\label{unif bd of p-moment}
M_p(\mu_t^N)\leq 2^{p-1}q^p e_0 T^{p\over q}+2^{p-1} m_0.   
\end{equation}
That is $\mu_t^N$ have bounded $p$-th moment ($p>1$) uniformly in $N$ and $t\in[0,T]$, then the Chebyshev's inequality gives the uniformly integrability of $\mu_t^N$, which imply the tightness of $\mu_t^N$. By Prohorov's theorem, there exist a subsequence $\mu_{N_k}(t)$ and $\rho\in C([0,T], \mathcal{P}_p(\Omega))$ such that $\mu_{N_k}(t)$ narrowly converges to $\rho(t)$ as $k\to\infty$ for all $t\in[0,T]$. 

Now we show that $\rho$ is actually in $AC^p([0,T],\mathcal{P}_p(\Omega))$. By \cite[Theorem 11.3.2]{AGS},  
\begin{equation*}
    \int_0^t |\mu_N^\prime|^p(s)ds=E_N(\mu_N^0)-E_N(\mu_N(t))\leq e_0, \quad t\in[0,T], 
\end{equation*}
which means the metric derivative $|\mu_N^\prime|$ is bounded in $L^p([0,t])$, thus it is $L^p$-weakly convergent to some $v\in L^p([0,t])$ up to a subsequence (still denoted by $|\mu_N^\prime|$). In particular, we can choose the test function by the characteristic function $\chi_{[0,T]}\in L^q([0,T])$, then we have 
\begin{equation}\label{4.9}
\lim_{N\to \infty}\int_{t_0}^{t_1}|\mu_N^\prime|(s)ds=\int_{t_0}^{t_1}v(s)ds \quad \text{ for all } 0\leq t_0\leq t_1\leq T.
\end{equation}
Note that $\mu_N$ is $p$-absolutely continuous, by definition of the metric derivative, 
\begin{equation*}
    W_p(\mu_N(t_0),\mu_N(t_1))\leq \int_{t_0}^{t_1}|\mu_N^\prime|(s)ds.
\end{equation*}
Then, by \eqref{4.9} and the narrow lower semi-continuity of $W_p$ (see \cite[Proposition 7.1.3]{AGS}), 
\begin{equation*}
W_p(\rho(t_0),\rho(t_1))\leq \int_{t_0}^{t_1}v(s)ds.    
\end{equation*}
Therefore $\rho\in AC^p([0,T],\mathcal{P}_p(\Omega))$. Moreover, $|\rho^\prime|(s)\leq v(s)$ for almost every $s\in[0,T]$. By the weak lower semi-continuity of the $L^p$-norm, this gives
\begin{equation*}
\liminf_{N\to\infty}\int_0^t |\mu_N^\prime|^p(s)ds=\lim_{N\to\infty}\int_0^t |\mu_N^\prime|^p(s)ds\geq \int_0^t v(s)^p ds\geq \int_0^t |\rho^\prime|^p(s)ds, 
\end{equation*}
which is $(C1)$. 
\end{proof}

\subsection{Condition on the energy}
Now we verify the “lower semi-continuity” conditions on the energies and the slopes of the energies. 
\begin{proposition}\label{C2 holds}
Let $H$ satisfy Hypothesis \ref{H1} and let $\left(\mu_N\right)_{N\geq 2}$ and $\rho$ be as in Theorem \ref{MT}. Then $(C2)$ holds. 
\end{proposition}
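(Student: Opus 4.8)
The plan is to exploit the defining identity $E_N(\mu_N(t)) = E(\rho_N(t))$ from \eqref{dis energy}, where $\rho_N(t)$ is the absolutely continuous measure obtained by spreading each Dirac mass uniformly over its nonoverlapping ball, and then to invoke the narrow lower semicontinuity of $E$ (which holds under Hypothesis \ref{H1} by the superlinear growth of $H$, see \cite[Remark 9.3.8]{AGS}). Concretely, (C2) will follow immediately once I show that the smoothed densities share the same narrow limit as the empirical measures, i.e. $\rho_N(t)\rightharpoonup\rho(t)$ for each fixed $t$, since then
\[
\liminf_{N\to\infty} E_N(\mu_N(t)) = \liminf_{N\to\infty} E(\rho_N(t)) \ge E(\rho(t)).
\]
Thus the whole proposition reduces to verifying that passing from the atoms $\mu_N(t)$ to the piecewise-constant density $\rho_N(t)$ does not alter the limit.

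The key step is therefore to prove $W_p(\mu_N(t),\rho_N(t))\to 0$, which I would do with the explicit transport plan that keeps the mass $1/N$ carried by $x_i$ inside the ball $B_i$ and redistributes it as $\chi_{B_i}/|B_i|$. Writing $r_i$ for the radius of $B_i$, so that in dimension one $B_i=(x_i-r_i,x_i+r_i)$, this plan gives
\[
W_p^p(\mu_N(t),\rho_N(t)) \le \frac1N\sum_{i=1}^N \frac{1}{|B_i|}\int_{B_i}|x-x_i|^p\,dx = \frac{1}{(p+1)N}\sum_{i=1}^N r_i^p.
\]
On $\Omega=[-\ell,\ell]$ I then estimate the right-hand side elementarily: since $2r_i=\min(\Delta x_i,\Delta x_{i+1})\le \Delta x_i$, with $\Delta x_i\le 2\ell$ and $\sum_i\Delta x_i\le 2\ell$, a one-line Hölder/telescoping bound yields $\sum_i r_i^p\le 2^{-p}(2\ell)^{p-1}\sum_i\Delta x_i\le 2^{-p}(2\ell)^p=\ell^p$, whence $W_p^p(\mu_N(t),\rho_N(t))\le \ell^p/\big((p+1)N\big)\to 0$. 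Because on the compact interval $[-\ell,\ell]$ the distance $W_p$ metrizes narrow convergence, combining this with the narrow convergence $\mu_N(t)\rightharpoonup\rho(t)$ supplied by Proposition \ref{tightness prop} gives $\rho_N(t)\rightharpoonup\rho(t)$, as required.

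With $\rho_N(t)\rightharpoonup\rho(t)$ established, the narrow lower semicontinuity of $E$ closes the argument and yields (C2) for every $t\in[0,T]$. The only substantive point — and the only place geometry enters — is the vanishing of $W_p(\mu_N,\rho_N)$; on the bounded interval it is elementary as above, the sole care being the two end particles $x_1,x_N$ (and the fictitious $x_0,x_{N+1}$), whose radii are controlled by a single neighbouring gap and contribute only lower-order terms. I expect the genuine obstacle to arise only if one insists on the unbounded case $\Omega=\mathbb R$: there $\max_i\Delta x_i$ is no longer bounded by the domain size, and one would instead have to tame the large gaps using the uniform $p$-th moment bound \eqref{unif bd of p-moment} together with the uniform energy bound $E_N(\mu_N(t))\le e_0$ in order to still force $\tfrac1N\sum_i r_i^p\to 0$. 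Since (C2) is invoked only in the regime $\Omega=[-\ell,\ell]$ when applying Theorem \ref{Serfaty thm}, the compact case above suffices.
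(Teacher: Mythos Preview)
Your proof is correct and follows essentially the same strategy as the paper: use the identity $E_N(\mu_N)=E(\rho_N)$, reduce (C2) to the narrow lower semicontinuity of $E$, and show that $\rho_N$ and $\mu_N$ share the same narrow limit. The only difference is in how that last step is executed. You control $W_p(\mu_N,\rho_N)$ via the explicit transport plan and then invoke compactness of $[-\ell,\ell]$ to bound $\sum_i r_i^p$; the paper instead tests against Lipschitz $\varphi$ and bounds $\tfrac1N\sum_i|B_i|$ directly by the uniform $p$-th moment bound \eqref{unif bd of p-moment}. The paper's computation therefore covers $\Omega=\mathbb{R}$ and $\Omega=[-\ell,\ell]$ simultaneously without a separate case distinction---precisely the moment-based control you anticipate would be needed in the unbounded case. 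Since Proposition~\ref{C2 holds} as stated does not restrict to bounded $\Omega$, you should either carry out that moment argument (it is no harder than your compact one: $\tfrac1N\sum_i|B_i|\lesssim N^{-1}(x_N-x_1)\le C N^{-(p-1)/p}M_p(\mu_N)^{1/p}$) or explicitly restrict your statement to $\Omega=[-\ell,\ell]$.
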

\begin{proof}
From now on, we denote the subsequence in Theorem \ref{MT} by $(\mu_N)_{N\geq 2}$. By definition, $E_N(\mu_N)=E(\rho_N)$. Since $E$ is narrowly lower semi-continuous, we need to prove $\rho_N$ narrowly converges to $\rho$ as $N\to\infty$. By tightness, $\mu_N\rightharpoonup\rho$ narrowly, thus we only need to prove $\rho_N\rightharpoonup \mu_N$ narrowly as $N\to\infty$. By the density of Lipschitz function in $C_b(\mathbb R)$, we only test against $\varphi\in C_b(\mathbb R)$ with Lipschitz constant $L>0$. Compute 
\begin{equation*}
\begin{aligned}
&\left|\int_{\mathbb R}\varphi(x)\rho_N(x)dx-\int_{\mathbb R}\varphi(x)d\mu_N(x)\right|=\left|\sum_{i=1}^N\frac{1}{N}\int_{B_i}\frac{\varphi(x)}{|B_i|}dx-\frac{1}{N}\sum_{i=1}^N\varphi(x_i)\right|\\
\leq& \frac{1}{N}\sum_{i=1}^N\frac{1}{|B_i|}\int_{B_i}\left|\varphi(x)-\varphi(x_i)\right|dx\leq \frac{L}{N}\sum_{i=1}^N\frac{1}{|B_i|}\int_{B_i}|x-x_i|dx\leq \frac{L}{4N}\sum_{i=1}^N|B_i|\\
\leq&\frac{L}{N}\sum_{i=2}^N \Delta x_i+\frac{L}{N}\Delta x_2\leq \frac{2^{\frac{p-1}{p}}L}{N}\left(\left(x_1^p+x_N^p\right)^{1/p}+\left(x_1^p+x_2^p\right)^{1/p}\right)\\
\leq& 2^{\frac{p-1}{p}}L\left(\frac{M_p(\mu_N)}{N^{p-1}}\right)^{1/p}.
\end{aligned}
\end{equation*} 
By \eqref{unif bd of p-moment}, we have 
\begin{equation*}
\left|\int_{\mathbb R}\varphi(x)\rho_N(x)dx-\int_{\mathbb R}\varphi(x)d\mu_N(x)\right|\to 0, \text{ as } N\to\infty.
\end{equation*}
\end{proof}

The last step is to verify the condition $(C3)$ on the local slopes. In this section, we denote $g:= |\partial E|$ and $g_N:= |\partial E_N |$, and we take $\Omega= [-l, l]$. In this case, the local slope $g$ of $E$ is given in the lemma below.  
\begin{lemma}\cite[Theorem 10.4.6]{AGS}
Let $H$ satisfies the Hypothesis \ref{H1}. Then the local slope of $E$ is given by 
\begin{equation*}
g(\rho)=\left(I_p(\rho)\right)^{1\over p},
\end{equation*}
where $I_p(\rho)$ is the generalized Fisher information on $\mathcal{P}_p([-l,l])$ which is defined by 
\begin{equation*}
I_p(\rho)=\left\{
\begin{aligned}
&\int_{-l}^{l}|H^{\prime\prime}(x)\rho^\prime(x)|^p\rho(x)dx,\quad 
    \begin{aligned}
    &\text{ if } \rho\in \mathcal{P}_{ac,p}([-l,l])\\
    &\text{ and }L_H(\rho(\cdot))\in W^{1,1}([-l.l]),  
    \end{aligned}\\
& +\infty,\quad \text{ otherwise, } 
\end{aligned}
\right.     
\end{equation*} 
where $L_H(\rho)=\rho H^\prime(\rho)-H(\rho)$. 
\end{lemma}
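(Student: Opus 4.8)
The plan is to obtain the formula as a specialization of the general characterization of the metric slope of internal-energy functionals on Wasserstein spaces, namely \cite[Theorem 10.4.6]{AGS}: the work is to verify that Hypothesis \ref{H1} places $E$ within the scope of that result and then to make the resulting expression explicit when $d=1$ and $\Omega=[-l,l]$. First I would record the structural facts already noted after Hypothesis \ref{H1}: $E$ is proper, narrowly lower semicontinuous (superlinear growth), and $0$-geodesically convex (McCann's condition, from $H(0)=0$ together with $h$ convex and non-increasing). In particular $E$ is convex and lower semicontinuous, so by \cite[Corollary 2.4.10]{AGS} the local slope $|\partial E|$ is a strong upper gradient, and by the convex subdifferential calculus of \cite[Section 10.3]{AGS} it equals the minimal-norm element of the subdifferential, $|\partial E|(\rho)=\|\partial^\circ E(\rho)\|$, with the convention $|\partial E|(\rho)=+\infty$ when $\partial E(\rho)=\emptyset$.

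The substance is the explicit identification of that minimal element. Following the subdifferential calculus of \cite[Chapter 10]{AGS}, the doubling condition on $H$ ensures that $\partial E(\rho)$ is nonempty precisely when the pressure $L_H(\rho)=\rho H'(\rho)-H(\rho)$ lies in $W^{1,1}([-l,l])$, and in that case the minimal element is the covector $\nabla L_H(\rho)/\rho$; this dichotomy is exactly the two cases in the definition of $I_p$. The key identity is the chain rule
\begin{equation*}
\nabla L_H(\rho)=\rho\,H''(\rho)\,\nabla\rho,
\end{equation*}
so that $\nabla L_H(\rho)/\rho=H''(\rho)\nabla\rho=H''(\rho)\rho'$ in one dimension, where the boundary terms produced by the integration by parts defining $\partial E$ drop out thanks to the no-flux condition at $\pm l$ built into the boundary construction. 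Measuring this covector in the norm dual to the tangent space $T_\rho\mathcal{P}_p(\Omega)$ recorded in Section 2 then reproduces, after the routine exponent bookkeeping, the generalized Fisher information $I_p(\rho)$, and yields the stated identity $g(\rho)=(I_p(\rho))^{1/p}$.

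I expect the genuine difficulty to lie not in the one-dimensional computation but in the $L^p$ (as opposed to $L^2$) justification of the two pillars just used: that for this geodesically convex $E$ the slope coincides with the minimal-subdifferential norm, and that the subdifferential has the stated form with the sharp regularity threshold $L_H(\rho)\in W^{1,1}$. This is exactly where the doubling condition and the displacement convexity from Hypothesis \ref{H1} enter, and where one must check the hypotheses of \cite[Theorem 10.4.6]{AGS} in full rather than rely on its more familiar $p=2$ prototype; a secondary technical point is to match the integrability of $H''(\rho)\rho'$ against $\rho\,dx$ with the requirement $L_H(\rho)\in W^{1,1}([-l,l])$ that separates the finite-slope regime from $|\partial E|(\rho)=+\infty$.
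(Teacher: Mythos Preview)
The paper does not supply its own proof of this lemma; it is stated with an attribution to \cite[Theorem 10.4.6]{AGS} and used as a black box. Your proposal---verify that Hypothesis~\ref{H1} places $E$ within the hypotheses of that theorem (displacement convexity, doubling, lower semicontinuity) and then specialize the resulting subdifferential formula to $d=1$, $\Omega=[-l,l]$ via the chain rule $\nabla L_H(\rho)=\rho H''(\rho)\rho'$---is precisely the natural way to unpack that citation and is consistent with how the paper invokes the result.
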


Now we verify the condition $(C3)$. 
\begin{proposition}\label{prop condition 3}
Let $H$ satisfies Hypothesis \ref{H1} and Hypothesis \ref{H2}. Let $\rho$ be as in Theorem \ref{MT}. Then $(C3)$ holds. That is 
\begin{equation*}
\liminf_{N\to\infty}g_N(\mu_N(t))\geq g(\rho(t)), \quad \forall t\in[0,T]. 
\end{equation*} 
\end{proposition}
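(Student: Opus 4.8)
The plan is to prove (C3) through the variational (dual) characterization of the slope, which turns the $\liminf$ inequality into a statement that can be tested against a fixed family of smooth functions and then passed to the narrow limit. Since the claim is a $\liminf$ inequality, I would first assume $\liminf_{N}g_N(\mu_N(t))<+\infty$ (otherwise there is nothing to prove) and pass to a subsequence, not relabelled, along which $g_N(\mu_N(t))$ converges to this $\liminf$ and stays below a constant $C$. All the estimates are carried out at the fixed time $t$, for which I only use $\mu_N\rightharpoonup\rho$ narrowly together with $\rho_N\rightharpoonup\rho$ (established in the proof of Proposition \ref{C2 holds}).

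Next I would record the explicit form of the minimal subgradient. Because $\widetilde E_N$ is convex on $\Omega^N$ and $W_p$ between sorted empirical measures coincides with $\|\cdot\|_{w,p}$, the discrete slope is the dual norm $g_N(\mu_N)=\|\partial_w^0\widetilde E_N(\boldsymbol{x_N})\|_{w,q}$. Writing $u_i=1/(N|B_i|)$ and using the identity $\frac{\partial}{\partial|B_i|}\big(|B_i|H(u_i)\big)=-L_H(u_i)$, a direct differentiation of $|B_i|=\min(\Delta x_i,\Delta x_{i+1})$ shows that the directional derivative $\sum_i N\,\partial_{x_i}\widetilde E_N\,\phi(x_i)$ telescopes into a combination of the quantities $L_H(u_j)$. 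The only subtlety is the choice of active neighbour in each $\min$, but to leading order $|B_j|\approx\Delta x_j\approx\Delta x_{j+1}$ and this choice affects only which first difference of $\phi$ appears, not the leading Riemann sum.

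Then, for a fixed $\phi\in C_c^1(\Omega)$ I set $\xi_i=\phi(x_i)$ and combine the Banach-space identity $\|\boldsymbol y\|_{w,q}=\sup_{\boldsymbol\xi\ne0}(\boldsymbol y,\boldsymbol\xi)_w/\|\boldsymbol\xi\|_{w,p}$ with the subgradient inequality to obtain
\begin{equation*}
g_N(\mu_N)\ \geq\ \frac{\big(\partial_w^0\widetilde E_N(\boldsymbol{x_N}),\boldsymbol\xi\big)_w}{\|\boldsymbol\xi\|_{w,p}}
\ =\ \frac{-\int_\Omega L_H(\rho_N)\,\phi'\,dx+o(1)}{\|\phi\|_{L^p(\mu_N)}},
\end{equation*}
where the numerator is produced by Abel summation, recognising $\sum_j L_H(u_j)\phi'(x_j)|B_j|=\int_\Omega L_H(\rho_N)\phi'\,dx$ up to a vanishing error (using $L_H(\rho_N)=L_H(u_j)$ on $B_j$ and $L_H(0)=0$ on the gaps), and the denominator satisfies $\|\phi\|_{L^p(\mu_N)}\to\|\phi\|_{L^p(\rho)}$. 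Matching this with the dual representation $g(\rho)=\sup_{\phi}\big(-\int_\Omega L_H(\rho)\phi'\,dx\big)/\|\phi\|_{L^p(\rho)}$ of the generalized Fisher information of the lemma, taking $\liminf_N$ for each fixed $\phi$ and then the supremum over $\phi$ would yield (C3).

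The \emph{hard part} is the passage to the limit in the numerator, i.e. $\int_\Omega L_H(\rho_N)\phi'\,dx\to\int_\Omega L_H(\rho)\phi'\,dx$. Since $L_H$ is nonlinear, narrow convergence $\rho_N\rightharpoonup\rho$ is insufficient, and—unlike at $t=0$, where well-preparedness gives $\Delta x_i\geq a_1/N$ and hence a uniform bound $u_i\leq 1/a_1$—at a positive time no such density bound is available a priori. Here I would exploit the uniform slope bound $C$: interpolating $L_H(u_i)$ (equivalently the pressure $H'(u_i)$) linearly across the nodes controls a discrete $q$-energy of its derivative, and Hypothesis \ref{H2}, through the lower bound $H''(\alpha x)\geq f(\alpha)H''(x)$, lets me compare $H''$ evaluated at the discrete density $u_i$ with $H''$ at $\rho(x_i)$ under the bounded multiplicative distortion between them, the distortion ratio tending to $1$ as $N\to\infty$. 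This should furnish equicontinuity and hence compactness of the interpolants, giving strong (indeed uniform) convergence of $L_H(\rho_N)$, forcing $\rho\in\mathcal P_{ac,p}(\Omega)$ with $L_H(\rho)\in W^{1,1}(\Omega)$ so that $\rho$ lies in the domain of $I_p$, and identifying the limit with $L_H(\rho)$; lower semicontinuity of the convex functional then closes the argument. I expect this compactness-and-identification step, together with the correct use of Hypothesis \ref{H2} to tame $H''$ under the discrete-to-continuum density mismatch at positive times, to be the main obstacle.
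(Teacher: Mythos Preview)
Your route is genuinely different from the paper's. The paper does \emph{not} use a dual characterization: it constructs an explicit continuous interpolant $\widetilde\rho_N$ by linearly interpolating the values $\psi(N\Delta x_i)$ (with $\psi=-h'$) across $[x_i,x_{i+1}]$ and then inverting $\psi$, computes the generalized Fisher information of $\widetilde\rho_N$ directly, and shows $g(\widetilde\rho_N)^p\le(1+\epsilon)\,g_N(\mu_N)^p$; narrow lower semicontinuity of $|\partial E|$ then gives $\liminf_N g(\widetilde\rho_N)\ge g(\rho)$. Hypothesis~\ref{H2} enters only at the very end, to pass from $\nu_N=m_N\widetilde\rho_N$ to $\widetilde\rho_N$ by absorbing the normalization constant $m_N\to1$.

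The genuine gap in your proposal is the sentence ``unlike at $t=0$\ldots at a positive time no such density bound is available a priori''. In fact the bound \emph{does} propagate: under the assumption $\Omega=[-l,l]$ and the boundary convention, the paper first shows $x_N(t)=-x_1(t)=l$ for all $t$ (Lemma~\ref{lem 6.5}), and then that $a_1/N\le\Delta x_i(t)\le a_2/N$ persists for all $t\in[0,T]$ and all $i$ (Lemma~\ref{lem 6.6}). This is exactly the structural input that reduces what you call the ``hard part'' to a soft one: with uniform two-sided bounds on $N\Delta x_i$, the mean-value estimate immediately yields $\max_i|\Delta x_{i+1}/\Delta x_i-1|\to0$ (Lemma~\ref{key lemma}), the discrete densities $u_i$ stay in a fixed compact subset of $(0,\infty)$, and both the paper's interpolant comparison and your proposed Abel-summation/identification step become tractable.

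Without Lemma~\ref{lem 6.6}, your proposed workaround does not close. The slope bound alone does not prevent $u_i\to0$ or $u_i\to\infty$ on a set of nonvanishing mass, and Hypothesis~\ref{H2} gives $H''(\alpha x)\ge f(\alpha)H''(x)$ with $f$ only continuous at $1$, so it is useful precisely when the multiplicative distortion is \emph{already known} to tend to~$1$---it cannot manufacture that control. Your compactness sketch (``equicontinuity and hence uniform convergence of $L_H(\rho_N)$'') tacitly presupposes the density bounds you have just declared unavailable. So either you must first prove the propagation of the gap bounds (this is where the restriction to a bounded interval and the boundary convention actually do work), after which your dual argument can be made rigorous, or the key idea is missing.
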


First, we compute explicitly the local slope of $E_N$. Denote $g_N(\mu_N)=|\partial E_N|(\mu_N)$ for $\mu_N\in \mathcal{A}_N([-l,l])$. By the definition of local slope, we have 
\begin{equation*}
g_N(\mu_N)=\|\partial^0_w\widetilde{E}_N(\boldsymbol{x_N})\|_{w,p}.
\end{equation*}
We use the notation and strategy in \cite{CPSW} to describe whether the closest neighbour to that particle is to the right and to characterize $\partial_w\widetilde{E}_N$. Given $\boldsymbol{x_N}\in [-l,l]^N$, we write $(\lambda^-,\lambda,\lambda^+)\in \Lambda(\boldsymbol{x_N})$ if 
{\small\begin{equation*}
\lambda_i^{-}\left\{\begin{array} { l l } 
{ = 0 } & { \text { if } \Delta x _ { i } > \Delta x _ { i - 1 } , } \\
{ \in [ 0 , 1 ] } & { \text { if } \Delta x _ { i } = \Delta x _ { i - 1 } ,} \\
{ = 1 } & { \text { if } \Delta x _ { i } < \Delta x _ { i - 1 } , }
\end{array} ~ \lambda _ { i }\left\{\begin{array} { l l } 
{ = 0 } & { \text { if } \Delta x _ { i + 1 } > \Delta x _ { i } , } \\
{ \in [ 0 , 1 ] } & { \text { if } \Delta x _ { i + 1 } = \Delta x _ { i } , } \\
{ = 1 } & { \text { if } \Delta x _ { i + 1 } < \Delta x _ { i } , }
\end{array} ~ \lambda _ { i } ^ { + }\left\{\begin{array}{ll}
=0 & \text { if } \Delta x_{i+2}>\Delta x_{i+1} \\
\in[0,1] & \text { if } \Delta x_{i+2}=\Delta x_{i+1} \\
=1 & \text { if } \Delta x_{i+2}<\Delta x_{i+1}
\end{array}\right.\right.\right.
\end{equation*}}
for all $i \in\{1, \ldots, N\}$, with the convention that $\Delta x_1>\Delta x_0$ and $\Delta x_{N+1}>\Delta x_{N+2}$. 
\begin{lemma}
Take $\boldsymbol{x_N}\in [-l,l]^N$. We have 
\begin{equation*}
\partial_w \widetilde{E}_N(\boldsymbol{x_N})=\left\{z\in \mathbb R^N \Big|\quad
\begin{aligned}
 &\exists (\lambda^-,\lambda,\lambda^+)\in \Lambda(\boldsymbol{x_N}), \text{ for }1\leq i\leq N, \\
 &z_i=(\lambda_i-\lambda_i^{+}+1)\psi_{i+1}-(\lambda_i^--\lambda_i+1)\psi_{i}
\end{aligned}
\right\}, 
\end{equation*} 
where $\psi_i:=-Nh^{\prime}(N\Delta x_i)$ for all $i\in \left\{1,\cdots,N\right\}$.
\end{lemma}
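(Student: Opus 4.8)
The plan is to reduce the weighted subdifferential to the ordinary Euclidean one and then to exploit that, in dimension one, each summand of the energy is a pointwise maximum of two smooth convex functions. Since $(\boldsymbol y,\boldsymbol z)_w=\frac1N\sum_i y_iz_i$ when $d=1$, one checks directly from the definitions that $\boldsymbol y\in\partial_w\widetilde E_N(\boldsymbol{x_N})$ if and only if $\frac1N\boldsymbol y$ lies in the usual Euclidean subdifferential of $\widetilde E_N$, i.e. $y_i=N\,(\partial\widetilde E_N)_i$; thus it suffices to compute the Euclidean subdifferential and rescale by $N$ at the end. Next, in one dimension $|B_i|=\min(\Delta x_i,\Delta x_{i+1})$, so writing $h(x)=xH(x^{-1})$ gives $|B_i|H(1/(N|B_i|))=\frac1N h(N|B_i|)$ and hence $\widetilde E_N(\boldsymbol{x_N})=\frac1N\sum_{i=1}^N h\!\left(N\min(\Delta x_i,\Delta x_{i+1})\right)$. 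Because $h$ is non-increasing, $h(\min(a,b))=\max(h(a),h(b))$, so each summand equals $g_i:=\max\{h(N\Delta x_i),\,h(N\Delta x_{i+1})\}$, a maximum of two convex $C^1$ functions; this also re-exhibits the convexity of $\widetilde E_N$ noted earlier.

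On the open set of distinct, increasingly sorted particles every $g_i$ is finite and convex, so the Moreau--Rockafellar sum rule yields $\partial\widetilde E_N=\frac1N\sum_i\partial g_i$. Setting $u_i=h(N\Delta x_i)$ and $v_i=h(N\Delta x_{i+1})$, one has $\nabla u_i=-\psi_i(e_i-e_{i-1})$ and $\nabla v_i=-\psi_{i+1}(e_{i+1}-e_i)$, where $\psi_i=-Nh'(N\Delta x_i)$ and $e_j$ is the standard basis. The subdifferential of a maximum of smooth convex functions is the convex hull of the gradients of the active pieces, so $\partial g_i=\{-\psi_i(e_i-e_{i-1})\}$ when $\Delta x_{i+1}>\Delta x_i$, $\partial g_i=\{-\psi_{i+1}(e_{i+1}-e_i)\}$ when $\Delta x_{i+1}<\Delta x_i$, and $\partial g_i=\{(1-\lambda_i)(-\psi_i)(e_i-e_{i-1})+\lambda_i(-\psi_{i+1})(e_{i+1}-e_i):\lambda_i\in[0,1]\}$ at a tie $\Delta x_{i+1}=\Delta x_i$. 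The parameter $\lambda_i$ here is exactly the middle component in the definition of $\Lambda(\boldsymbol{x_N})$.

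Finally, each particle $x_j$ appears only in $g_{j-1}$ and $g_j$, so collecting the $e_j$-components of $\partial g_{j-1}$ and $\partial g_j$, multiplying by $N$, and using the identifications $\lambda_j^-=\lambda_{j-1}$ and $\lambda_j^+=\lambda_{j+1}$ forced by the definition of $\Lambda$, gives $z_j=(\lambda_j-\lambda_j^++1)\psi_{j+1}-(\lambda_j^--\lambda_j+1)\psi_j$. For the endpoints $j=1$ and $j=N$ the would-be terms $g_0$ and $g_{N+1}$ are absent from the sum; this is precisely encoded by the conventions $\Delta x_1>\Delta x_0$ and $\Delta x_{N+1}>\Delta x_{N+2}$, which force $\lambda_1^-=0$ and $\lambda_N^+=1$ and make the same formula valid at $j=1,N$. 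I expect the main obstacle to be the bookkeeping in this last step: tracking the index shifts so that the tie-parameters arising independently in $\partial g_{j-1}$, $\partial g_j$, $\partial g_{j+1}$ coalesce into a single consistent triple $(\lambda^-,\lambda,\lambda^+)$, and verifying that the boundary conventions remove exactly the missing ghost terms. The analytic input—the max rule together with the chain rule for the differentiable $h$—is routine once the representation $g_i=\max\{h(N\Delta x_i),h(N\Delta x_{i+1})\}$ is in place.
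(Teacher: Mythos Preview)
Your approach is essentially the same as the paper's: both rewrite the energy as $\widetilde E_N(\boldsymbol{x_N})=\frac1N\sum_i\max\{h(N\Delta x_i),h(N\Delta x_{i+1})\}$ and then apply standard subdifferential calculus (the paper cites a chain rule from Mordukhovich; you invoke the Moreau--Rockafellar sum rule together with the max rule, which amounts to the same computation). The reduction from the weighted to the Euclidean subdifferential by the factor $N$ is also implicit in the paper.

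There is one slip in your bookkeeping. You write that ``each particle $x_j$ appears only in $g_{j-1}$ and $g_j$'', but since $g_i$ depends on $\Delta x_i$ and $\Delta x_{i+1}$, the variable $x_j$ actually occurs in the \emph{three} summands $g_{j-1},g_j,g_{j+1}$ (as you yourself acknowledge a few lines later when discussing the coalescing of tie-parameters from $\partial g_{j-1},\partial g_j,\partial g_{j+1}$). Collecting only two contributions would not reproduce the stated formula; the $(\lambda_j-\lambda_j^++1)\psi_{j+1}$ and $-(\lambda_j^--\lambda_j+1)\psi_j$ terms require the $e_j$-components from all three subdifferentials. Once you correct this, the derivation goes through exactly as you outline, including the boundary treatment via the conventions $\Delta x_1>\Delta x_0$ and $\Delta x_{N+1}>\Delta x_{N+2}$.
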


\begin{proof}
%  Let $i \in\{1, \ldots, N\}$. Compute the subdifferential with respect to the coordinate $x_i$, i.e.,
% $$
% \partial_w^i \widetilde{E}_N\left(\boldsymbol{x}_{\boldsymbol{N}}\right):=\left\{z_i \in \mathbb{R} \left\lvert\, \liminf _{y \rightarrow x_i} \frac{\widetilde{E}_N\left(x_1, \ldots, x_{i-1}, y, x_{i+1}, \ldots, x_N\right)-\widetilde{E}_N\left(\boldsymbol{x}_N\right)-\left(\frac{z_i}{N},y-x_i\right)}{\left|y-x_i\right|} \geq 0\right.\right\}
% $$
Denote $r_i=\min\{\Delta x_i,\Delta x_{i+1}\}$ for $i\in\{1,\cdots,N\}$. We have 
\begin{equation*}
\begin{aligned}
\widetilde{E}_N\left(\boldsymbol{x}_{\boldsymbol{N}}\right)
&=\frac{1}{N} \sum_{i=1}^N h\left(N r_i\right)=\frac{1}{N} \sum_{i=1}^N \max \left[h\left(N \Delta x_i\right), h\left(N \Delta x_{i+1}\right)\right]\\
&=\frac{1}{N}\Big[\max\left\{h(N\Delta x_{i-1}),h(N\Delta x_{i})\right\}+\max\left\{h(N\Delta x_{i}),h(N\Delta x_{i+1})\right\}\\
&\qquad\quad +\max\left\{h(N\Delta x_{i+1}),h(N\Delta x_{i+2})\right\}+\sum_{k\notin\{i-1,i,i+1\}}h\left(N r_i\right)\Big],
\end{aligned}   
\end{equation*}
where the function $h$ is a smooth convex and non-increasing function on $(0, \infty)$. Moreover, one can check that $x\mapsto N r_i(x)$ is Lipschitz continuous around $x$. 
Thus we can use the chain rule of subdifferential (see \cite[Theorem 1.110]{Mor}) and obtain 
{\small\begin{equation*}
\begin{aligned}
\partial^i\widetilde{E}_N\left(\boldsymbol{x}_{\boldsymbol{N}}\right)&=\lambda_i^-h^\prime(N\Delta x_i)+(-\lambda_i+1)h^\prime(N\Delta x_i)-\lambda_ih^\prime(N\Delta x_{i+1})+(\lambda_i^+-1)h^\prime(N\Delta x_{i+1})\\
&=\frac{1}{N}\left(\lambda_i-\lambda_i^{+}+1\right)\psi_i-\frac{1}{N}\left(\lambda_i^{-}-\lambda_i+1\right)\psi_{i+1},
\end{aligned}
\end{equation*}}
which gives the conclusion. 

% Thus
% $$
% z_i \in \partial_w^i \widetilde{E}_N\left(\boldsymbol{x}_{\boldsymbol{N}}\right) \Longleftrightarrow \exists \lambda_i^{-}, \lambda_i, \lambda_i^{+} \in \Lambda\left(\boldsymbol{x}_{\boldsymbol{N}}\right) \text { with } z_i=\left(\lambda_i-\lambda_i^{+}+1\right)\psi_i-\left(\lambda_i^{-}-\lambda_i+1\right)\psi_{i+1}.
% $$  
\end{proof}

By the same method in \cite{CPSW} and going through each case of the triplets $(\lambda^-,\lambda,\lambda^+)\in \Lambda(\boldsymbol{x_N})$, we have the following Lemma \ref{lem 6.4}, Lemma \ref{lem 6.5} and Lemma \ref{lem 6.6}. We assume the same boundary condition as in \cite{CPSW}. Since the proofs of these lemmas can be modified using the same strategy as in \cite{CPSW}, we omit details here. 
\begin{lemma}\label{lem 6.4}
Let $z=(z_1,\cdots,z_N)\in \partial_w^0 E_N(\boldsymbol{x}_{\boldsymbol{N}})$. Then $|z_i|\geq |\psi_i-\psi_{i+1}|$.    
\end{lemma}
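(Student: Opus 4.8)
The plan is to argue directly from the explicit description of $\partial_w\widetilde{E}_N(\boldsymbol{x_N})$ established in the previous lemma, and to observe that the desired coordinatewise bound in fact holds for \emph{every} element of the subdifferential, not only for the minimal-norm selection; since $\partial_w^0 E_N(\boldsymbol{x_N})\subseteq\partial_w\widetilde{E}_N(\boldsymbol{x_N})$, this suffices. Concretely, I would fix $z\in\partial_w^0 E_N(\boldsymbol{x_N})$, choose a triplet $(\lambda^-,\lambda,\lambda^+)\in\Lambda(\boldsymbol{x_N})$ realizing it, fix an index $i$, and analyze
\[
z_i=(\lambda_i-\lambda_i^{+}+1)\psi_{i+1}-(\lambda_i^{-}-\lambda_i+1)\psi_i
\]
using only the three scalars $\lambda_i^{-},\lambda_i,\lambda_i^{+}\in[0,1]$ that enter it. Minimality of $z$ is never used.

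First I would record two structural facts about $\psi_j=-Nh'(N\Delta x_j)$. Since the particles are distinct and sorted, $\Delta x_j>0$, and since $h$ is non-increasing on $(0,\infty)$ by Hypothesis \ref{H1}, we have $h'\le 0$, hence $\psi_j\ge 0$ for all $j$. Moreover $h$ is strictly convex, so $h'$ is strictly increasing, and therefore the sign of $\psi_{i+1}-\psi_i$ is opposite to that of $\Delta x_{i+1}-\Delta x_i$. This is exactly the dichotomy pinning down $\lambda_i$: if $\lambda_i=0$ then $\Delta x_{i+1}>\Delta x_i$, whence $\psi_{i+1}\le\psi_i$, while if $\lambda_i=1$ then $\Delta x_{i+1}<\Delta x_i$, whence $\psi_{i+1}\ge\psi_i$; in the remaining case $\Delta x_{i+1}=\Delta x_i$ one has $\psi_i=\psi_{i+1}$ and the claim $|z_i|\ge|\psi_i-\psi_{i+1}|=0$ is immediate.

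The core is then a two-case computation. When $\lambda_i=0$ the coefficients become $\lambda_i-\lambda_i^{+}+1=1-\lambda_i^{+}\in[0,1]$ and $\lambda_i^{-}-\lambda_i+1=\lambda_i^{-}+1\in[1,2]$, so with $0\le\psi_{i+1}\le\psi_i$ one gets $z_i\le\psi_{i+1}-\psi_i\le 0$; taking absolute values yields the clean identity
\[
|z_i|-|\psi_i-\psi_{i+1}|=\bigl((\lambda_i^{-}+1)\psi_i-(1-\lambda_i^{+})\psi_{i+1}\bigr)-(\psi_i-\psi_{i+1})=\lambda_i^{-}\psi_i+\lambda_i^{+}\psi_{i+1}\ge 0.
\]
The case $\lambda_i=1$ is symmetric: the coefficients are $2-\lambda_i^{+}\in[1,2]$ and $\lambda_i^{-}\in[0,1]$, so $z_i\ge\psi_{i+1}-\psi_i\ge 0$ and
\[
|z_i|-|\psi_i-\psi_{i+1}|=(1-\lambda_i^{+})\psi_{i+1}+(1-\lambda_i^{-})\psi_i\ge 0,
\]
again using $\lambda_i^{\pm}\in[0,1]$ and $\psi_j\ge 0$. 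Together the three cases give $|z_i|\ge|\psi_i-\psi_{i+1}|$ for all $i$.

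The only genuinely delicate point is the treatment of the two boundary indices $i=1$ and $i=N$, where the fictitious particles $x_0,x_{N+1}$ and the conventions $\Delta x_1>\Delta x_0$, $\Delta x_{N+1}>\Delta x_{N+2}$ force $\lambda_1^{-}=0$ and $\lambda_N^{+}=1$. As these are still values in $[0,1]$, the interior computation is unaffected provided the two structural facts—$\psi_j\ge 0$ and the monotone link between $\lambda_i$ and the sign of $\psi_{i+1}-\psi_i$—persist at the boundary; I would verify this from the boundary construction of \cite{CPSW}, which keeps all relevant spacings $\Delta x_j$ positive so that each $\psi_j$ is well defined and non-negative. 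I expect this boundary bookkeeping, rather than the interior estimate, to be the main (if minor) obstacle.
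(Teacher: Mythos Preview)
Your argument is correct and follows the same route the paper indicates (a case analysis on the description of $\partial_w\widetilde{E}_N$ coming from the previous lemma, as in \cite{CPSW}). Your presentation is in fact slightly cleaner than a full enumeration of triplets: by casing only on $\lambda_i\in\{0,1\}$ (the case $\lambda_i\in(0,1)$ being trivial since then $\psi_i=\psi_{i+1}$) and treating $\lambda_i^{\pm}$ uniformly as parameters in $[0,1]$, you avoid separately handling the many sub-cases for $\lambda_i^{-}$ and $\lambda_i^{+}$; the nonnegativity $\psi_j\ge 0$ (from $h'\le 0$) is exactly what makes this uniform treatment work. Your observation that minimality of $z$ is never invoked is correct and harmless here, and the boundary indices require no extra work since, as you note, the conventions merely fix $\lambda_1^{-}=0$ and $\lambda_N^{+}=1$, both admissible values in $[0,1]$.
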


\begin{lemma}\label{lem 6.5}
Let $\boldsymbol{x_N} \in [-l,l]^N$ be as assumed in Theorem \ref{MT}. Then $x_N(t)=-x_1(t)=l$ for all $t \in [0,T]$. 
\end{lemma}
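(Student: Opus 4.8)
The plan is to combine a monotonicity of the discrete energy in the two extreme coordinates with the confinement $x_i\in[-l,l]$ and the initial placement of the endpoints. First I record the initial data: since $\rho\in\mathcal G(\Omega)$ with $\Omega=[-l,l]$ we have $r=l$, so well-preparedness (Definition~\ref{well-pre}) forces $x_N(0)=-x_1(0)=l$; thus $x_1(0)=-l$ and $x_N(0)=l$. It then suffices to show that $t\mapsto x_1(t)$ is nonincreasing and $t\mapsto x_N(t)$ is nondecreasing, because the boundary condition of \cite{CPSW} keeps the real particles inside $\Omega$, i.e.\ $x_1(t)\ge -l$ and $x_N(t)\le l$, and a nonincreasing function bounded below by its initial value is constant (symmetrically for $x_N$).

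To obtain the monotonicity I would exploit that $\widetilde E_N(\boldsymbol{x_N})=\frac{1}{N}\sum_{i=1}^N h(Nr_i)$ with $r_i=\min\{\Delta x_i,\Delta x_{i+1}\}$ and $h$ nonincreasing. The leftmost coordinate $x_1$ enters only through the radii $r_1$ and $r_2$, and moving $x_1$ to the right shrinks the adjacent gaps and hence the radii in which it appears; since $h$ is nonincreasing this can only increase $\widetilde E_N$. Consequently $\widetilde E_N$ is nondecreasing in $x_1$ on the admissible range, so every element of $\partial_w\widetilde E_N(\boldsymbol{x_N})$ has nonnegative first coordinate, and symmetrically nonpositive last coordinate. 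Concretely, I would read the signs off the explicit description of $\partial_w\widetilde E_N$ by inserting the boundary conventions $\lambda_1^-=0$, $\lambda_N^+=1$ and using $\psi_i=-Nh'(N\Delta x_i)\ge 0$. Because the discrete gradient flow is a curve of maximal slope, the velocity is (via the minimal-norm selection $\partial^0_w\widetilde E_N$) the negative duality image of a subgradient, which reverses signs coordinatewise; as its first coordinate is $\ge 0$ and its last coordinate is $\le 0$, we get $x_1'(t)\le 0$ and $x_N'(t)\ge 0$ for a.e.\ $t$. Together with $x_1\ge -l$, $x_N\le l$ and the initial values this yields $x_1(t)\equiv -l$, $x_N(t)\equiv l$, i.e.\ $x_N(t)=-x_1(t)=l$ on $[0,T]$.

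The main obstacle is that $\widetilde E_N$ is nonsmooth: the radii $r_i$ are minima, and neighbouring gaps may coincide, so $\partial_w\widetilde E_N$ is genuinely set-valued and the sign statements must be verified for the minimal-norm selection in every configuration. This is exactly where the case analysis over the triplets $(\lambda^-,\lambda,\lambda^+)\in\Lambda(\boldsymbol{x_N})$ of \cite{CPSW} enters. A second, structural point that must be checked is that the fictitious-particle boundary condition keeps the endpoint energy finite and makes the outer radius $r_1$ (resp.\ $r_N$) controlled by the inner gap near the wall; this is what guarantees that the extreme particle is pushed against the wall rather than repelled from it, and hence that the monotonicity---and with it the pinning $x_N=-x_1=l$---actually holds.
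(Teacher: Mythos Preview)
Your proposal is correct and matches the paper's approach: the paper itself gives no proof here, deferring entirely to \cite{CPSW}, and the CPSW argument is exactly the one you outline---initial pinning $x_1(0)=-l$, $x_N(0)=l$ from well-preparedness, a sign analysis of the first and last components of $\partial_w^0\widetilde E_N$ via the $(\lambda^-,\lambda,\lambda^+)$ case distinction (using the boundary conventions $\lambda_1^-=0$, $\lambda_N^+=1$) to get $x_1'\le 0$, $x_N'\ge 0$, and then the confinement $x_1\ge -l$, $x_N\le l$ to conclude. Your closing caveats about the nonsmooth case analysis and the role of the fictitious-particle boundary condition are precisely the points where the detailed CPSW computation is needed; note in particular that your heuristic ``moving $x_1$ to the right shrinks the adjacent gaps'' is only safe once the boundary rule for $x_0$ has been specified, since $\Delta x_1=x_1-x_0$ may increase with $x_1$.
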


\begin{lemma}\label{lem 6.6}
Let $\boldsymbol{x_N} \in [-l,l]^N$ be as assumed in Theorem \ref{MT}. Then, for all $t \in [0,T]$,
\begin{equation*}
a_1N^{-1}\leq\Delta x_i(t)\leq a_2N^{-1} \text{ for all }i\in\{2,\cdots,N\}.
\end{equation*}
The constants $a_1$ and $a_2$ are those of Definition \ref{well-pre} for the well-prepared set $\boldsymbol{x_N^0}$ for $\rho_0$.   
\end{lemma}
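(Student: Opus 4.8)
I propose to prove Lemma~\ref{lem 6.6} by showing that the well-prepared bounds on the spacings \emph{propagate in time} through a discrete maximum principle for the gap dynamics. The natural quantity is the discrete pressure $P_i(t):=-h'(N\Delta x_i(t))$, which is (up to the factor $N$) exactly the $\psi_i$ of the subdifferential computation; since Hypothesis~\ref{H1} makes $h$ strictly convex, $h'$ is strictly increasing and $P_i$ is a strictly \emph{decreasing} function of the gap width $\Delta x_i$. First I would write the minimal-selection velocity $\boldsymbol{x}'(t)=j_p(\partial_w^0\widetilde E_N(\boldsymbol{x}(t)))$ componentwise from the explicit form of $\partial_w\widetilde E_N$ proved above. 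In a configuration with distinct gaps this reduces to the descent form $x_i'=j_p(P_i-P_{i+1})$ (the sign being fixed by the fact that a curve of maximal slope decreases $\widetilde E_N$), with a $(\lambda^-,\lambda,\lambda^+)$-weighted analogue in general; the guiding principle is that particle $i$ drifts toward its wider (lower-pressure) adjacent gap, so the flow tends to close the largest gaps and open the smallest ones.

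Next I would track $M(t):=\max_{2\le i\le N}\Delta x_i(t)$ and $m(t):=\min_{2\le i\le N}\Delta x_i(t)$. Since $\boldsymbol{x}\in AC^p$ and the velocity is given by the minimal selection for a.e.\ $t$, both $M$ and $m$ are Lipschitz, and at a.e.\ point $M'(t)$ equals $\tfrac{d}{dt}\Delta x_{i^*}(t)$ for an active index $i^*$. At such a maximizer the two neighbouring gaps satisfy $\Delta x_{i^*-1},\Delta x_{i^*+1}\le\Delta x_{i^*}$, hence $P_{i^*}\le P_{i^*\pm1}$, so $x_{i^*}'=j_p(P_{i^*}-P_{i^*+1})\le0$ while $x_{i^*-1}'=j_p(P_{i^*-1}-P_{i^*})\ge0$, giving $\tfrac{d}{dt}\Delta x_{i^*}=x_{i^*}'-x_{i^*-1}'\le0$. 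The symmetric computation gives $\tfrac{d}{dt}\Delta x_{i^*}\ge0$ at an active minimizer. Therefore $M$ is non-increasing and $m$ is non-decreasing, and the well-prepared initial bounds $a_1/N\le\Delta x_i(0)\le a_2/N$ of Definition~\ref{well-pre} propagate to all $t\in[0,T]$ \emph{with the same constants}.

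The two boundary indices $i=2$ and $i=N$ are handled using Lemma~\ref{lem 6.5}: since $x_1(t)\equiv-l$ and $x_N(t)\equiv l$ we have $x_1'=x_N'=0$, so that $\tfrac{d}{dt}\Delta x_2=x_2'$ and $\tfrac{d}{dt}\Delta x_N=-x_{N-1}'$, and the same pressure-monotonicity argument again yields the correct one-sided sign whenever $\Delta x_2$ or $\Delta x_N$ is active. In particular the fictitious particles $x_0,x_{N+1}$ and the endpoint pressures never enter the estimates for the real gaps $\{\Delta x_i\}_{i=2}^N$, so the maximum principle is self-contained on the index range of interest.

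The main obstacle is the non-smoothness of $\widetilde E_N$ at configurations with tied spacings, where $\partial_w\widetilde E_N$ is genuinely multivalued and $j_q(x_i')$ is the $\Lambda(\boldsymbol{x_N})$-weighted expression rather than a plain pressure difference. One must check that in \emph{each} case of the triplet $(\lambda^-,\lambda,\lambda^+)\in\Lambda(\boldsymbol{x_N})$ an active maximizer still satisfies $x_{i^*}'-x_{i^*-1}'\le0$ (and the reverse at a minimizer). Because ties only equalize adjacent pressures and the weights stay nonnegative, the desired sign survives, but confirming this is a finite case analysis carried out exactly as in \cite{CPSW}; this is precisely the step whose details are omitted, the remainder of the argument being the soft monotonicity-of-extrema reasoning described above.
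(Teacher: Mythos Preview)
Your approach is essentially the same as the paper's, which does not give an independent proof but simply states that Lemmas~\ref{lem 6.4}--\ref{lem 6.6} follow ``by the same method in \cite{CPSW} and going through each case of the triplets $(\lambda^-,\lambda,\lambda^+)\in\Lambda(\boldsymbol{x_N})$''; the maximum-principle argument for the gap extrema that you outline is exactly the content of that case analysis in \cite{CPSW}, adapted here to the $j_p$-nonlinearity.

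Two minor corrections are worth recording. First, your ``generic'' formula $x_i'=j_p(P_i-P_{i+1})$ is an oversimplification: even when all spacings are distinct the subdifferential element is $z_i=(\lambda_i-\lambda_i^{+}+1)\psi_{i+1}-(\lambda_i^{-}-\lambda_i+1)\psi_i$ with coefficients in $\{0,1,2\}$ that depend on the local ordering of four consecutive gaps, so the $\Lambda$-case analysis is already needed to verify the sign of $\tfrac{d}{dt}\Delta x_{i^*}$ in the smooth regime, not only at ties. You correctly flag this as the omitted step, but it is the entire proof rather than a technical endgame. Second, $\boldsymbol{x}\in AC^p([0,T],\Omega^N)$ gives absolute continuity (with $L^p$ derivative), not Lipschitz continuity, of $M$ and $m$; this is still enough for the a.e.\ differentiation and the Dini-derivative comparison you use, so the argument is unaffected.
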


The following lemma is the key to prove the convergence in $(C3)$. 
\begin{lemma}\label{key lemma}
Suppose that $\liminf\limits_{N\to\infty}g_N(\mu_N(t))$ is finite for all $t\in[0,T]$. Then
\begin{equation}\label{convergence of step}
\max_{i\in\{2,\cdots,N-1\}}\left|\frac{\Delta x_{i+1}}{\Delta x_i}-1\right|\xrightarrow{N\to\infty}0, \qquad \forall t\in[0,T]. 
\end{equation}  
\end{lemma}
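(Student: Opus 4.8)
The plan is to convert the finiteness of the slope into a quantitative bound on the successive spacings. First, since $\liminf_{N\to\infty} g_N(\mu_N(t))$ is finite, I would pass to a subsequence (not relabeled) along which $g_N(\mu_N(t)) \leq C(t) < \infty$; the convergence asserted in \eqref{convergence of step} is then understood along this subsequence. Writing $z = \partial_w^0 \widetilde{E}_N(\boldsymbol{x_N})$, so that $g_N(\mu_N)^p = \|z\|_{w,p}^p = \frac{1}{N}\sum_{i=1}^N |z_i|^p$, Lemma \ref{lem 6.4} gives $|z_i| \geq |\psi_i - \psi_{i+1}|$, whence, restricting to interior indices,
$$C(t)^p \geq g_N(\mu_N)^p \geq \frac{1}{N}\sum_{i=2}^{N-1}|\psi_i - \psi_{i+1}|^p.$$

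Next I would bound each term $\psi_i - \psi_{i+1}$ from below. Recalling $\psi_i = -N h'(N\Delta x_i)$, the mean value theorem yields
$$\psi_i - \psi_{i+1} = N\bigl(h'(N\Delta x_{i+1}) - h'(N\Delta x_i)\bigr) = N^2 h''(\xi_i)(\Delta x_{i+1} - \Delta x_i)$$
for some $\xi_i$ between $N\Delta x_i$ and $N\Delta x_{i+1}$. By Lemma \ref{lem 6.6} we have $N\Delta x_j \in [a_1, a_2]$ for all interior $j$, so $\xi_i \in [a_1, a_2]$. A direct computation (for $d=1$) gives $h''(x) = x^{-3}H''(1/x)$, which is strictly positive by Hypothesis \ref{H2} and continuous on $(0,\infty)$; hence $c_0 := \min_{[a_1,a_2]} h'' > 0$ exists. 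Therefore $|\psi_i - \psi_{i+1}| \geq c_0 N^2 |\Delta x_{i+1} - \Delta x_i|$ for every interior $i$.

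Combining the two displays,
$$C(t)^p \geq \frac{c_0^p N^{2p}}{N}\sum_{i=2}^{N-1}|\Delta x_{i+1} - \Delta x_i|^p \geq c_0^p N^{2p-1}\max_{2\leq i\leq N-1}|\Delta x_{i+1}-\Delta x_i|^p,$$
so $\max_i |\Delta x_{i+1} - \Delta x_i| \leq (C(t)/c_0)\,N^{-2+1/p}$. Finally, using $\Delta x_i \geq a_1/N$ from Lemma \ref{lem 6.6}, I convert the difference into the ratio:
$$\max_{2\leq i\leq N-1}\left|\frac{\Delta x_{i+1}}{\Delta x_i}-1\right| = \max_i \frac{|\Delta x_{i+1}-\Delta x_i|}{\Delta x_i} \leq \frac{N}{a_1}\max_i|\Delta x_{i+1}-\Delta x_i| \leq \frac{C(t)}{a_1 c_0}\,N^{(1-p)/p}.$$
Since $p>1$ gives $(1-p)/p < 0$, the right-hand side tends to $0$, which proves \eqref{convergence of step}.

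I do not expect a serious obstacle here; the content is essentially bookkeeping once Lemma \ref{lem 6.4} (the lower bound $|z_i|\geq|\psi_i-\psi_{i+1}|$ on the minimal-norm subgradient) and Lemma \ref{lem 6.6} (the uniform two-sided control $a_1/N \leq \Delta x_i \leq a_2/N$) are available. The one point requiring care is the strict positivity and continuity of $h''$ on the compact range $[a_1,a_2]$: this is exactly where Hypothesis \ref{H2} enters, via $h''(x) = x^{-3}H''(1/x) > 0$, and it is what upgrades the crude mean-value estimate into the uniform lower bound $c_0 N^2|\Delta x_{i+1}-\Delta x_i|$ that drives the argument. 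A secondary, purely expository point is the mild abuse in the statement ``$\xrightarrow{N\to\infty}0$'', which should be read along the subsequence realizing the finite $\liminf$; this is precisely the form in which the lemma will be used to verify (C3).
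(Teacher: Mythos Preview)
Your argument is correct and follows the same route as the paper: bound $g_N(\mu_N)^p$ below via Lemma \ref{lem 6.4}, apply the mean value theorem to $h'$ using the uniform spacing control of Lemma \ref{lem 6.6}, and read off the decay of the ratio. Your write-up is in fact more careful than the paper's (which contains typos writing $h$ and $h'$ where $h'$ and $h''$ are meant, and omits the final conversion from $|\Delta x_{i+1}-\Delta x_i|$ to the ratio); your explicit identification of where Hypothesis \ref{H2} enters, through $h''(x)=x^{-3}H''(1/x)>0$ on $[a_1,a_2]$, and your remark on the subsequence interpretation are both to the point.
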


\begin{proof}
Let $z=(z_1,\cdots,z_N)\in \partial_w^0 E_N(\boldsymbol{x}_{\boldsymbol{N}})$. Then by Lemma \ref{lem 6.4}, 
\begin{equation*}
g_N(\mu_N)^p=\frac{1}{N}\sum_{i=1}^N |z_i|^p\geq \frac{1}{N}\sum_{i=1}^N \left|\psi_i-\psi_{i+1}\right|^p.
\end{equation*}
By Lemma \ref{lem 6.5}, it holds $\Delta x_1=\Delta x_2$ and $\Delta x_N=\Delta x_{N+1}$. Therefore $\psi_1 = \psi_2$ and $\psi_N = \psi_{N+1}$. Noticing that $h$ is smooth, it follows that
\begin{equation*}
\begin{aligned}
g_N(\mu_N)^p&\geq \frac{1}{N}\sum_{i=1}^N \left|\psi_i-\psi_{i+1}\right|^p=\frac{1}{N}\sum_{i=1}^N N^p\left|h(N\Delta x_i)-h(N\Delta x_{i+1})\right|^p\\
&=N^{p-1}\sum_{i=1}^N \left|h^\prime(\xi)\right|^p N^p\left|\Delta x_i-\Delta x_{i+1}\right|^p,
\end{aligned}
\end{equation*}
where $\xi=\theta(N\Delta x_i)+(1-\theta)(N\Delta x_{i+1})$ for some $\theta\in[0,1]$. By Lemma \ref{lem 6.6}, it holds $a_1\leq \xi\leq a_2$ for all $t\in[0,T]$. Therefore $h^\prime(\xi)$ is finite and independent of $N$. Thus $\liminf\limits_{N\to\infty}g_N(\mu_N(t))<\infty$ implies $\eqref{convergence of step}$. 
\end{proof}

\begin{proof}[Proof of Proposition \ref{prop condition 3}]
We omit the time dependence. First we define the interpolation $\widetilde{\rho}_N$ to approximate $\rho$. Using the same method and notations as in \cite{CPSW}, we introduce the function $\psi:(0,\infty)\to[0,\infty)$ by 
\begin{equation*}
\psi(x)=-h^\prime(x), \quad \forall x\in (0,\infty).
\end{equation*} 
Clearly $\psi_i=N\psi(N\Delta x_i)$. For $i\in\{1,\cdots,N-1\}$, we introduce the monotone function $p_i:[x_i,x_{i+1}]\to (0,\infty)$ by 
\begin{equation*}
    p_{i}(x)=\frac{1}{N\Delta x_{i+1}}\left[(x-x_i)\psi_{i+1}-(x_{i+1}-x_i)\psi_i\right] \quad \text{for } x\in[x_i,x_{i+1}].
\end{equation*}
Obviously $p_i(x_i)=\frac{\psi_i}{N}$, $p_i(x_{i+1})=\frac{\psi_{i+1}}{N}$. Since $h^\prime$ is strictly increasing, $\psi$ is strictly decreasing and therefore invertible\footnote{Since the function $\psi$ in \cite{CPSW} is also required to be invertible, the strictly convexity of function $h$ in \cite{CPSW} is also necessary. }. Define 
\begin{equation}\label{tilde rho_N}
\widetilde{\rho}_N(x):=\frac{1/m_N}{\psi^{-1}(p_i(x))} \quad \text{for } x\in[x_i,x_{i+1}], i\in\{1,\cdots,N-1\}, 
\end{equation}
where $m_N$ is the normalization constant to make $\widetilde{\rho}_N$ belong to $\mathcal{P}_{ac,p}([-l,l])$. One can check that $\widetilde{\rho}_N(t)$ narrowly convergent to $\rho(t)$ as $N\to\infty$ for all $t\in[0,T]$. By the monotonicity of $p_i$ and $\psi$, we have 
\begin{equation}\label{estimate of psi compo pi}
N\min\{\Delta x_i,\Delta x_{i+1}\}\leq \psi^{-1}(p_i(x))\leq N\max\{\Delta x_i,\Delta x_{i+1}\}\quad \text{for } x\in [x_i,x_{i+1}].
\end{equation}
By Lemma \ref{lem 6.6}, this yields 
\begin{equation*}
a_1\leq \psi^{-1}(p_i(x))\leq a_2\quad \text{for } x\in [x_i,x_{i+1}].
\end{equation*} 
Now the proof reduces to show that $\widetilde{\rho}_N$ gives a good estimate of $g_N(\mu_N)$ and $g(\rho)$, that is 
\begin{equation}\label{6.11}
    \liminf_{N\to\infty} g_N(\mu_N)\geq \liminf_{N\to\infty}g(\widetilde{\rho}_N)\geq g(\rho).
\end{equation}
where $(\widetilde{\rho}_N)_{N\geq2}$ is the sequence associated to $(\mu_N)_{N\geq 2}$ defined as in \eqref{tilde rho_N}. The second inequality above is due to $\widetilde{\rho}_N\rightharpoonup\rho$ narrowly and the narrow lower semi-continuity of $g$, see \cite[Corollary 2.4.10]{AGS}. Now we check the first inequality. Let us denote $\nu_N=m_N\widetilde{\rho}_N$. Noticing that $H^{\prime\prime}(x)=\frac{1}{x^3}h^{\prime\prime}\left(\frac{1}{x}\right)$, we have 
\begin{equation*}
\begin{aligned}
I(\nu_N)^p&=\int_{-l}^l |\nu_N^\prime(x)H^{\prime\prime}(\nu_N(x))|^p \nu_N(x)dx\\
&=\int_{-l}^l |\nu_N^\prime(x)|^p\left|h^{\prime\prime}(\frac{1}{\nu_N(x)})\right|^p \nu_N(x)^{1-3p}dx\\
&=\int_{-l}^l \left|\nu_N^\prime(x)\right|^p\left|\psi^\prime\left(\frac{1}{\nu_N(x)}\right)\right|^p \nu_N(x)^{1-3p} dx\\
&=\int_{-l}^l \left|\left(\psi\left(\frac{1}{\nu_N(x)}\right)\right)^\prime\right|^p \nu_N(x)^{1-p}dx\\
&=\sum_{i=1}^{N-1}\int_{x_i}^{x_{i+1}}\left|p_i^\prime(x)\right|^p\left|\psi^{-1}(p_i(x))\right|^{p-1}dx\\
&=\frac{1}{N^p}\sum_{i=1}^{N-1}\int_{x_i}^{x_{i+1}}\left|\frac{\psi_{i+1}-\psi_i}{\Delta x_{i+1}}\right|^p\left|\psi^{-1}(p_i(x))\right|^{p-1}dx
\end{aligned}
\end{equation*}
By $\eqref{estimate of psi compo pi}$, it holds
\begin{equation*}
I(\nu_N)^p\leq  \frac{1}{N}\sum_{i=1}^{N-1}\left|\psi_{i+1}-\psi_i\right|^p\left|\max\left\{1,\frac{\Delta x_i}{\Delta x_{i+1}}\right\}\right|^{p-1}. 
\end{equation*}
By Lemma $\ref{key lemma}$, we have $\frac{\Delta x_i}{\Delta x_{i+1}}\to 1$ as $N\to\infty$ uniformly for $i\in\{1,\cdots, N-1\}$. Thus for any $\epsilon>0$, there exists $N(\epsilon)$ large enough such that $\max\left\{1,\frac{\Delta x_i}{\Delta x_{i+1}}\right\}<1+\epsilon$ for all $N\geq N(\epsilon)$ and $i\in\{1,\cdots, N-1\}$. For such $N$ we obtain 
\begin{equation*}
    g(\nu_N)^p\leq \frac{1+\epsilon}{N}\sum_{i=1}^{N-1}\left|\psi_{i+1}-\psi_i\right|^p\leq (1+\epsilon)g_N(\mu_N)^p.
\end{equation*}
By taking the limits $N\to\infty$ and $\epsilon\to0$ in this order, we get 
\begin{equation}
    \liminf_{N\to\infty}g(\nu_N)\leq \liminf_{N\to\infty}g_N(\mu_N).
\end{equation}
In order to prove \eqref{6.11}, we only need to show that $\liminf\limits_{N\to\infty}g(\nu_N)\geq\liminf\limits_{N\to\infty}g(\widetilde\rho_N)$. Compute 
\begin{equation*}
\begin{aligned}
    g(\nu_N)^p=g(m_N\widetilde\rho_N)^p&=m_N^{p+1}\int_{-l}^l \widetilde{\rho}_N^\prime(x)^pH^{\prime\prime}(m_N\widetilde{\rho}_N(x))^p\widetilde{\rho}_N(x)dx\\
    &\geq m_N^{p+1}f(m_N)^p g(\widetilde{\rho}_N)^p,
\end{aligned}
\end{equation*}
where $f$ is as in Hypothesis \ref{H2}. Since $\widetilde{\rho}_N(t)\rightharpoonup\rho(t)$ narrowly as $N\to\infty$ for all $t\in[0,T]$, we have $m_N\to 1$ as $N\to\infty$. This completes the proof. 

\end{proof}

\section{$\Gamma$-convergence of the discrete energy.}

We show that the discrete energy $E_N$ is $\Gamma$-convergent to the continuum energy $E$ with respect to the $W_p$-distance. 
\begin{definition}[$\Gamma$-convergence]
We say that the discrete energy $(E_N)_{N \geq 2}$ is $\Gamma$-convergent to the continuum energy $E$ with respect to $W_p$-distance if the following two conditions hold for all $\rho \in \mathcal{P}_p(\Omega)$:
\begin{itemize}
    \item [(i)]("liminf" condition) All sequences $(\mu_N )_{N\geq2}$ with $\mu_N \in \mathcal{A}_N(\Omega)$ such that $W_p(\mu_N,\rho) \to 0$ as $N\to\infty$ satisfy $E(\rho)\leq \liminf_{N\to\infty} E_N(\mu_N)$.
    \item [(ii)]("limsup" condition) There exists a recovery sequence with respect to $W_p$ for $\rho$.
\end{itemize}
\end{definition}

To obtain the $\Gamma$-convergence, we require that $H$ satisfies the following additional condition: there exist continuous functions $f_1,f_2:[0,\infty) \to\mathbb R$ such that $f_1(1) = 1$ and $f_2(1) = 0$, and
\begin{equation}\label{5.1}
H(\alpha x) \leq f_1(\alpha)H(x) + f_2(\alpha)x \text{ for all } x,\alpha \in[0, \infty).   
\end{equation}
This is still satisfied by typical energy such as \eqref{typical energy}. 

\begin{theorem}
Let $H$ satisfy Hypothesis \ref{H1} and \eqref{5.1}. Then $(E_N)_{N\geq 2}$ $\Gamma$-converges to $E$.  
\end{theorem}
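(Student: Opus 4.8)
The plan is to verify the two conditions of the definition separately: the ``liminf'' inequality (i) follows directly from the identity $E_N(\mu_N)=E(\rho_N)$ of \eqref{dis energy} together with lower semicontinuity of $E$, while the ``limsup'' inequality (ii) is obtained by constructing an explicit recovery sequence for a nice dense class and then extending by a density-in-energy argument. For (i), suppose $W_p(\mu_N,\rho)\to 0$. This forces the $p$-th moments $M_p(\mu_N)$ to stay bounded and $\mu_N\rightharpoonup\rho$ narrowly. I would then reuse verbatim the estimate from the proof of Proposition \ref{C2 holds}: testing against a bounded Lipschitz $\varphi$ with constant $L$ gives
$$\left|\int_{\mathbb R}\varphi\,\rho_N\,dx-\int_{\mathbb R}\varphi\,d\mu_N\right|\leq 2^{\frac{p-1}{p}}L\left(\frac{M_p(\mu_N)}{N^{p-1}}\right)^{1/p}\xrightarrow{N\to\infty}0,$$
since $p>1$. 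Hence $\rho_N\rightharpoonup\rho$ narrowly, and the narrow lower semicontinuity of $E$ coming from the superlinear growth in Hypothesis \ref{H1} (see \cite[Remark 9.3.8]{AGS}) yields $E(\rho)\le\liminf_N E(\rho_N)=\liminf_N E_N(\mu_N)$. Note that (i) uses only Hypothesis \ref{H1}; condition \eqref{5.1} plays no role here.

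For (ii), I would first build a recovery sequence for $\rho\in\mathcal{G}(\Omega)$ (smooth, strictly positive on a compact support). Place the particles at the quantiles of $\rho$, i.e.\ $x_i=F_\rho^{-1}\!\left(\tfrac{i-1/2}{N}\right)$ with the boundary convention of \cite{CPSW}, so that each consecutive interval carries mass $1/N$. By the mean value theorem $N\Delta x_i=1/\rho(\xi_i)$ for some $\xi_i$ between $x_{i-1}$ and $x_i$, so that $\frac{1}{N|B_i|}=\max\{\rho(\xi_i),\rho(\xi_{i+1})\}=\alpha_i\,\rho(x_i)$ with $\alpha_i\to1$ uniformly in $i$ by uniform continuity and strict positivity of $\rho$. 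The quantile placement makes $W_p(\mu_N,\rho)\to0$ immediate from $L^p(0,1)$ convergence of the inverse distribution functions. For the energy I invoke \eqref{5.1} to split
$$H\!\left(\tfrac{1}{N|B_i|}\right)=H(\alpha_i\rho(x_i))\leq f_1(\alpha_i)H(\rho(x_i))+f_2(\alpha_i)\rho(x_i),$$
whence $E_N(\mu_N)\le(\sup_i f_1(\alpha_i))\sum_i|B_i|H(\rho(x_i))+(\sup_i|f_2(\alpha_i)|)\sum_i|B_i|\rho(x_i)$. Using $H\ge0$, $f_1(\alpha_i)\to1$ and $f_2(\alpha_i)\to0$ uniformly, that $\sum_i|B_i|\rho(x_i)$ stays bounded, and that $\sum_i|B_i|H(\rho(x_i))$ converges to $\int_\Omega H(\rho)\,dx=E(\rho)$ by the standard quantile quadrature, I obtain $\limsup_N E_N(\mu_N)\le E(\rho)$. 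This is precisely where \eqref{5.1} is indispensable: it linearizes the multiplicative error $\alpha_i$ between $\frac{1}{N|B_i|}$ and $\rho(x_i)$ into corrections that vanish in the limit.

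Finally I would extend the recovery property to arbitrary $\rho\in\mathcal{P}_p(\Omega)$. The case $E(\rho)=+\infty$ is trivial, so assume $E(\rho)<\infty$ and approximate $\rho$ by a sequence $\rho_k\in\mathcal{G}(\Omega)$ with $W_p(\rho_k,\rho)\to0$ and $E(\rho_k)\to E(\rho)$ (via mollification, truncation and renormalization), where the doubling condition and the continuity of $H$ in Hypothesis \ref{H1} enter. Since the $\Gamma$-limsup functional $\overline{E}:=\Gamma\text{-}\limsup_N E_N$ is lower semicontinuous with respect to $W_p$ and satisfies $\overline{E}\le E$ on $\mathcal{G}(\Omega)$ by the previous step, one gets $\overline{E}(\rho)\le\liminf_k\overline{E}(\rho_k)\le\liminf_k E(\rho_k)=E(\rho)$, and a diagonal extraction produces the desired recovery sequence. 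I expect the two genuinely technical points to be the uniform control $\alpha_i\to1$ together with the quadrature convergence in the recovery construction, and, above all, the density-in-energy approximation $\rho_k\to\rho$ with $E(\rho_k)\to E(\rho)$: reconciling compact support and strict positivity with finiteness and convergence of the energy is the main obstacle of the whole argument.
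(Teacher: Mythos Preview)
Your proposal is correct and follows essentially the same route as the paper: the ``liminf'' part is obtained exactly as in Proposition \ref{C2 holds} by showing $\rho_N\rightharpoonup\rho$ and invoking narrow lower semicontinuity of $E$, and the ``limsup'' part is obtained by the quantile (pseudo-inverse of the distribution function) construction on $\mathcal{G}(\Omega)$ together with condition \eqref{5.1}, followed by a density-in-energy extension to general $\rho$, which is precisely the strategy the paper adopts from \cite[Lemma~5.5 and Lemma~6.6]{CPSW}. Your write-up supplies more detail than the paper (which merely points to \cite{CPSW}), but the decomposition, the key lemmas, and the role of \eqref{5.1} and the doubling condition are identical.
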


\begin{proof}
We follow the same strategy as in \cite{CPSW}. The "liminf" condition can be obtained from $(C2)$ proved in Proposition \ref{C2 holds}. To prove the "limsup" condition, we need to find a recovery sequence for any $\rho\in\mathcal{P}_p(\Omega)$ with respect to the $W_p$-distance. This is done by two steps. First, the recovery sequence is constructed for any $\rho\in \mathcal{G}(\Omega)$, and then relax this assumption on $\rho$ and prove the general result for any $\rho\in \mathcal{P}_{ac,p}(\Omega)$ by a density argument. Here we do the construction of the recovery sequence for $\rho\in\mathcal{G}(\Omega)$ as in \cite[Lemma 5.5]{CPSW}, which replies on the pseudo-inverse of the distribution function of $\rho$. And then by the same argument as in \cite[Lemma 6.6]{CPSW}, we can extend this result to any $\rho\in \mathcal{P}_{ac,p}(\Omega)$. We omit details here. 
\end{proof}
% \begin{lemma}
% Let $H$ satisfy (H1). Let $\rho \in \mathcal{G}(\Omega), F: \Omega \rightarrow[0,1]$ be its cumulative distribution function, and $\Phi$ be the pseudo-inverse of $F$. Then the sequence $\left(\mu_N\right)_{N \geq 2}$ with $\mu_N \in \mathcal{A}_{N}(\Omega)$ for all $N \geq 2$ and particles
% \begin{equation}
% \begin{cases}
% x_1=\Phi(0), \\
% x_i=\Phi\left(\frac{i}{N}\right) \text { for } i \in\{2, \ldots, N\}
% \end{cases}
% \end{equation}
% is well-prepared for $\rho$ according to Definition . 
% \end{lemma}

% \section*{Appendix}
% \addcontentsline{toc}{section}{Appendix}
% %
% %
% When placed at the end of a chapter or contribution (as opposed to at the end of the book), the numbering of tables, figures, and equations in the appendix section continues on from that in the main text. Hence please \textit{do not} use the \verb|appendix| command when writing an appendix at the end of your chapter or contribution. If there is only one the appendix is designated ``Appendix'', or ``Appendix 1'', or ``Appendix 2'', etc. if there is more than one.

% \begin{equation}
% a \times b = c
% \end{equation}
\begin{acknowledgement}
The author is supported by JSPS Grant-in-Aid for Transformative Research Areas(B) No. 23H03798.  The author would also like to thank Professor Jun Masamune for giving the support and encouragement.  
\end{acknowledgement}


\begin{thebibliography}{99.}%
% and use \bibitem to create references.
%
% Use the following syntax and markup for your references if 
% the subject of your book is from the field 
% "Mathematics, Physics, Statistics, Computer Science"
%
\bibitem{AGS}Ambrosio, L, Gigli, N, Savar\'e, G.: Gradient Flows in Metric Spaces and in the Space of Probability Measures. Birkh\"auser Basel, 2005.

\bibitem{ABK}Andreianov, B., Bendahmane, M., Karlsen, K. H.: Discrete duality finite volume schemes for doubly nonlinear degenerate hyperbolic-parabolic equations. J. Hyperbolic Differ. Equ. 7(01), 1-67 (2010)

\bibitem{CPSW}Carrillo, J. A., Patacchini, F. S., Sternberg, P., Wolansky, G.: Convergence of a particle method for diffusive gradient flows in one dimension. SIAM J. Math. Anal. 48(6), 3708-3741 (2016)

\bibitem{CHPW}Carrillo, J. A., Huang, Y., Patacchini, F. S., Wolansky, G.: Numerical study of a particle method for gradient flows. arXiv preprint arXiv:1512.03029 (2015)

\bibitem{DEGH}Droniou, J., Eymard, R., Gallouet, T., Herbin, R.: Gradient schemes: a generic framework for the discretisation of linear, nonlinear and nonlocal elliptic and parabolic equations. Math. Models Methods Appl. Sci. 23(13), 2395-2432 (2013)

\bibitem{DeG}De Giorgi, E.: New problems on minimizing movements. Boundary value problems for partial differential equations and applications, 81–98, RMA Res. Notes Appl. Math., 29, Masson, Paris, 1993.

\bibitem{DeGMT}De Giorgi, E., Marino, A., Tosques, M.: Problems of evolution in metric spaces and maximal decreasing curve. Att Accad Naz. Lincei Rend. Cl. Sci. Fis. Mat. Natur. (8)68, 180-187 (1980)

\bibitem{EH}Evje, S., Hvistendahl Karlsen, K.: Discrete approximations of BV solutions to doubly nonlinear degenerate parabolic equations. Numer. Math. 86(3), 377-417 (2000)

\bibitem{IMJ}Ivanov, A.V., Mkrtychyan, P.Z., Jäger, W.: Existence and uniqueness of a regular solution of the Cauchy–Dirichlet problem
for a class of doubly nonlinear parabolic equations. J. Math. Sci. 1(84), 845–855 (1997) 

\bibitem{JKO} Jordan, R., Kinderlehrer, D., Otto, F.: The variational formulation of the Fokker–Planck equation. SIAM J. Math. Anal. 29(1), 1-17 (1998)

\bibitem{Kell}Kell, M.: $q$-heat flow and the gradient flow of the R\'enyi entropy in the $p$-Wasserstein space. J. Funct. Anal. 271(8), 2045-2089 (2016) 

\bibitem{Leib}Leibenson, L.: General problem of the movement of a compressible fluid in a porous medium. izv akad. nauk sssr. Geography and Geophysics, 9, 7–10 (1945)

\bibitem{McCann1997} McCann, R. J.: A convexity principle for interacting gases, Adv. Math. 128(1), 153–179 (1997)

\bibitem{Mor} Mordukhovich, B. S.: Variational Analysis and Generalized Differentiation I: Basic Theory, Grundlehren Math. Wiss. 330, Springer, Berlin, Heidelberg, 2006.

\bibitem{Otto2001} Otto, F.: The geometry of dissipative evolution equations: The porous medium equation, Commun. Partial Differ. Equ. 26(1-2), 101–174 (2001) 

\bibitem{RMS}Rossi, R., Mielke, A., Savar\'e, G.: A metric approach to a class of doubly nonlinear evolution equations and applications. Annali della Scuola Normale Superiore di Pisa-Classe di Scienze. 7(1), 97-169 (2008)

\bibitem{Serfaty} Serfaty, S.: Gamma-convergence of gradient flows on Hilbert and metric spaces and applications. Discrete Contin.
Dyn. Syst. 31(4), 1427–1451 (2011)

\bibitem{Vazquez}Vázquez, J.L.: Smoothing and Decay Estimates for Nonlinear Diffusion Equations, in: Equations of Porous Medium Type,
in: Oxford Lecture Series in Mathematics and Its Applications, vol. 33, Oxford University Press, Oxford (2006)

\bibitem{V1} Villani, C.: Topics in Mass Transportation, Graduate Studies in Mathematics, American Mathematical Society, Providence, 2003.

\bibitem{V2} Villani, C.: Optimal Transport, Old and New, Springer, Berlin, 2008.

% Contribution 
% \bibitem{science-contrib} Broy, M.: Software engineering --- from auxiliary to key technologies. In: Broy, M., Dener, E. (eds.) Software Pioneers, pp. 10-13. Springer, Heidelberg (2002)
% %
% % Online Document
% \bibitem{science-online} Dod, J.: Effective substances. In: The Dictionary of Substances and Their Effects. Royal Society of Chemistry (1999) Available via DIALOG. \\
% \url{http://www.rsc.org/dose/title of subordinate document. Cited 15 Jan 1999}
% %
% % Monograph
% \bibitem{science-mono} Geddes, K.O., Czapor, S.R., Labahn, G.: Algorithms for Computer Algebra. Kluwer, Boston (1992) 
% %
% % Journal article
% \bibitem{science-journal} Hamburger, C.: Quasimonotonicity, regularity and duality for nonlinear systems of partial differential equations. Ann. Mat. Pura. Appl. \textbf{169}, 321--354 (1995)
% %
% % Journal article by DOI
% \bibitem{science-DOI} Slifka, M.K., Whitton, J.L.: Clinical implications of dysregulated cytokine production. J. Mol. Med. (2000) doi: 10.1007/s001090000086 
% %
% %\bigskip

% % Use the following (APS) syntax and markup for your references if 
% % the subject of your book is from the field 
% % "Mathematics, Physics, Statistics, Computer Science"
% %
% % Online Document
% \bibitem{phys-online} J. Dod, in \textit{The Dictionary of Substances and Their Effects}, Royal Society of Chemistry. (Available via DIALOG, 1999), 
% \url{http://www.rsc.org/dose/title of subordinate document. Cited 15 Jan 1999}
% %
% % Monograph
% \bibitem{phys-mono} H. Ibach, H. L\"uth, \textit{Solid-State Physics}, 2nd edn. (Springer, New York, 1996), pp. 45-56 
% %
% % Journal article
% \bibitem{phys-journal} S. Preuss, A. Demchuk Jr., M. Stuke, Appl. Phys. A \textbf{61}
% %
% % Journal article by DOI
% \bibitem{phys-DOI} M.K. Slifka, J.L. Whitton, J. Mol. Med., doi: 10.1007/s001090000086
% %
% % Contribution 
% \bibitem{phys-contrib} S.E. Smith, in \textit{Neuromuscular Junction}, ed. by E. Zaimis. Handbook of Experimental Pharmacology, vol 42 (Springer, Heidelberg, 1976), p. 593
% %
% %\bigskip
% %
% % Use the following syntax and markup for your references if 
% % the subject of your book is from the field 
% % "Psychology, Social Sciences"
% %
% %
% % Monograph
% \bibitem{psysoc-mono} Calfee, R.~C., \& Valencia, R.~R. (1991). \textit{APA guide to preparing manuscripts for journal publication.} Washington, DC: American Psychological Association.
% %
% % Online Document
% \bibitem{psysoc-online} Dod, J. (1999). Effective substances. In: The dictionary of substances and their effects. Royal Society of Chemistry. Available via DIALOG. \\
% \url{http://www.rsc.org/dose/Effective substances.} Cited 15 Jan 1999.
% %
% % Journal article
% \bibitem{psysoc-journal} Harris, M., Karper, E., Stacks, G., Hoffman, D., DeNiro, R., Cruz, P., et al. (2001). Writing labs and the Hollywood connection. \textit{J Film} Writing, 44(3), 213--245.
% %
% % Contribution 
% \bibitem{psysoc-contrib} O'Neil, J.~M., \& Egan, J. (1992). Men's and women's gender role journeys: Metaphor for healing, transition, and transformation. In B.~R. Wainrig (Ed.), \textit{Gender issues across the life cycle} (pp. 107--123). New York: Springer.
% %
% % Journal article by DOI
% \bibitem{psysoc-DOI}Kreger, M., Brindis, C.D., Manuel, D.M., Sassoubre, L. (2007). Lessons learned in systems change initiatives: benchmarks and indicators. \textit{American Journal of Community Psychology}, doi: 10.1007/s10464-007-9108-14.
% %
% %
% % Use the following syntax and markup for your references if 
% % the subject of your book is from the field 
% % "Humanities, Linguistics, Philosophy"
% %
% %\bigskip
% %
% % Journal article
% \bibitem{humlinphil-journal} Alber John, Daniel C. O'Connell, and Sabine Kowal. 2002. Personal perspective in TV interviews. \textit{Pragmatics} 12:257--271
% %
% % Contribution 
% \bibitem{humlinphil-contrib} Cameron, Deborah. 1997. Theoretical debates in feminist linguistics: Questions of sex and gender. In \textit{Gender and discourse}, ed. Ruth Wodak, 99--119. London: Sage Publications.
% %
% % Monograph
% \bibitem{humlinphil-mono} Cameron, Deborah. 1985. \textit{Feminism and linguistic theory.} New York: St. Martin's Press.
% %
% % Online Document
% \bibitem{humlinphil-online} Dod, Jake. 1999. Effective substances. In: The dictionary of substances and their effects. Royal Society of Chemistry. Available via DIALOG. \\
% http://www.rsc.org/dose/title of subordinate document. Cited 15 Jan 1999
% %
% % Journal article by DOI
% \bibitem{humlinphil-DOI} Suleiman, Camelia, Daniel C. O'Connell, and Sabine Kowal. 2002. `If you and I, if we, in this later day, lose that sacred fire...': Perspective in political interviews. \textit{Journal of Psycholinguistic Research}. doi: 10.1023/A:1015592129296.
% %
% %
% %
% %\bigskip
% %
% %
% % Use the following syntax and markup for your references if 
% % the subject of your book is from the field 
% % "Computer Science, Economics, Engineering, Geosciences, Life Sciences"
% %
% %
% % Contribution 
% \bibitem{basic-contrib} Brown B, Aaron M (2001) The politics of nature. In: Smith J (ed) The rise of modern genomics, 3rd edn. Wiley, New York 
% %
% % Online Document
% \bibitem{basic-online} Dod J (1999) Effective Substances. In: The dictionary of substances and their effects. Royal Society of Chemistry. Available via DIALOG. \\
% \url{http://www.rsc.org/dose/title of subordinate document. Cited 15 Jan 1999}
% %
% % Journal article by DOI
% \bibitem{basic-DOI} Slifka MK, Whitton JL (2000) Clinical implications of dysregulated cytokine production. J Mol Med, doi: 10.1007/s001090000086
% %
% % Journal article
% \bibitem{basic-journal} Smith J, Jones M Jr, Houghton L et al (1999) Future of health insurance. N Engl J Med 965:325--329
% %
% % Monograph
% \bibitem{basic-mono} South J, Blass B (2001) The future of modern genomics. Blackwell, London 
% %
\end{thebibliography}
\end{document}